\numberwithin{equation}{section}
\newtheorem{theorem}{Theorem}[section]
\newtheorem*{theorem*}{Theorem}
\newtheorem{lemma}[theorem]{Lemma}
\newtheorem{claim}[theorem]{Claim}
\newtheorem{proposition}[theorem]{Proposition}
\theoremstyle{definition}{

\newtheorem{definition}[theorem]{Definition}
\newtheorem*{definition*}{Definition}

}
\theoremstyle{remark}{
\newtheorem*{remark*}{Remark}

}
\newcommand{\R}{\mathbb R}
\newcommand{\Z}{\mathbb Z}
\newcommand{\E}{\mathbb{E}}
\renewcommand{\P}{\mathbb{P}}
\DeclareMathOperator{\var}{Var}
\DeclareMathOperator{\Cov}{Cov}
\renewcommand{\epsilon}{\varepsilon}
\renewcommand{\phi}{\varphi}
\newcommand{\ty}{\tilde{Y}}
\newcommand{\tnu}{\tilde{\nu}}
\newcommand{\hpi}{\hat{\Pi}}
\newcommand{\Q}{\mathbb{Q}}
\date{}
\begin{document}
\title{Rigidity and tolerance for perturbed lattices}

\author{Yuval Peres}
\address{Yuval Peres\hfill\break
Microsoft Research\\
One Microsoft Way\\
Redmond, WA 98052-6399, USA.}
\email{peres@microsoft.com}
\urladdr{}

\author{Allan Sly}
\address{Allan Sly\hfill\break
University of California, Berkeley and Australian National University \hfill\break
Department of Statistics\\
367 Evans Hall\\
Berkeley, CA 94720, USA.}
\email{sly@stat.berkeley.edu}
\urladdr{}

\begin{abstract}

A perturbed lattice is a point process $\Pi=\{x+Y_x:x\in \mathbb{Z}^d\}$ where the lattice points in $\mathbb{Z}^d$ are perturbed by i.i.d.\ random variables $\{Y_x\}_{x\in \mathbb{Z}^d}$.  A random point process $\Pi$ is said to be rigid if $|\Pi\cap B_0(1)|$, the number of points in a ball, can be exactly determined given $\Pi \setminus B_0(1)$, the points outside the ball.  The process $\Pi$ is called deletion tolerant if removing one point of $\Pi$ yields a process with distribution indistinguishable from that of $\Pi$. Suppose that  $Y_x\sim N_d(0,\sigma^2 I)$ are Gaussian vectors with with $d$ independent components of variance $\sigma^2$. Holroyd and Soo showed that in dimensions $d=1,2$ the resulting Gaussian perturbed lattice $\Pi$ is rigid and deletion intolerant.  We show that in dimension $d\geq 3$ there exists a critical parameter $\sigma_r(d)$ such that   $\Pi$ is rigid if $\sigma<\sigma_r$ and deletion tolerant (hence non-rigid)  if $\sigma>\sigma_r$.

\end{abstract}

\maketitle

\section{Introduction}\label{sec:intro}

Let $\Pi=\{x+Y_x:x\in \mathbb{Z}^d\}$ denote the lattice   $\mathbbm{Z}^d$ perturbed by independent and identically distributed random variables $\{Y_x\}_{x\in \mathbb{Z}^d}$ taking values in $\R^d$. In this paper we address the questions of rigidity and deletion tolerance of such point processes. Rigidity holds if given the points of $\Pi$ outside a ball, one can determine exactly the   number of points of $\Pi$ inside that ball.

Deletion tolerance concerns the effect of removing a single point.  If one point, say $Y_0$, is removed from $\Pi$, can this be detected?  More formally, are the laws of $\Pi$ and $\Pi_u = \big\{x+Y_x:x\in \mathbb{Z}^d \setminus \{u\}\big\}$ mutually singular for $u \in \mathbb{Z}^d$?

\begin{definition}
A $\Pi$-point is an $\mathbb{R}^d$ valued random variable $Z$ such that $Z\in\Pi$ a.s.  A point process $\Pi$ is {\bf deletion tolerant} if for any $\Pi$-point $Z$, the point process $\Pi \setminus Z$ is absolutely continuous\footnote{When discussing  absolute continuity or singularity of two random objects, we are referring to their laws.} with respect to  $\Pi$. The point process $\Pi$ is {\bf deletion singular} if  $\Pi$ and $\Pi\setminus Z$ are mutually singular for any $\Pi$-point $Z$.  We say that $\Pi$ is {\bf insertion tolerant} if for any  Borel set $V \subset \mathbb{R}^d$ with Lebesgue measure $\mathcal{L}(V)\in(0,\infty)$, if $U$ is independent of $\Pi$ and uniform in $V$ then~$\Pi \cup U$ is absolutely continuous with respect to  $\Pi$.  If $\Pi$ and $\Pi \cup U$ are mutually singular for all such $V$, then we say that $\Pi$ is {\bf insertion singular}.

For a  point process  $\Pi$ and a ball $B\subset\mathbb{R}^d$ we define $\Pi_{\hbox{in}}=\Pi_{\hbox{in}}(B):=\Pi \cap B$ and $\Pi_{\hbox{out}}(B)=\Pi \cap B^c$.  We say that $\Pi$ is {\bf rigid} if for all balls $B\subset\mathbb{R}^d$ there exists a measurable function $N=N_B$ on the collection of discrete point sets in $\mathbb{R}^d$ such that $N_B(\Pi_{\hbox{out}}(B))= | \Pi_{\hbox{in}}(B)| \ a.s.$.
\end{definition}

Rigidity turns out to be closely related to deletion tolerance where we consider removing multiple points.  We write $\Pi_S := \big\{x+Y_x:x\in \mathbb{Z}^d \setminus S \big\}$.

\begin{proposition}\label{p:rigid-k-tolerant}
If the distribution of $Y_x$ has a density which is everywhere positive, then the perturbed lattice $\Pi=\{x + Y_x: x\in\mathbb{Z}^d\}$ is rigid if and only if $\Pi$ and $\Pi_S$ are mutually singular for all finite sets $S\subset \mathbb{Z}^d$.
\end{proposition}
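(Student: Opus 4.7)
The plan is to prove each implication separately. The forward direction uses rigidity functions on an exhausting sequence of balls, while the converse requires the positive-density hypothesis.

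For the forward direction, suppose $\Pi$ is rigid and fix a finite $S \subset \Z^d$. Choose balls $B_n \uparrow \R^d$ with rigidity functions $N_n := N_{B_n}$, and set
\[
A := \{\omega : N_n(\omega \cap B_n^c) = |\omega \cap B_n|\ \text{for every}\ n\}.
\]
By rigidity applied to each $B_n$, $\P(\Pi \in A) = 1$. In the coupling where $\Pi_S$ is obtained from $\Pi$ by discarding $\{x + Y_x : x \in S\}$, for every $n$ large enough the finitely many removed points all lie in $B_n$, so $\Pi_S \cap B_n^c = \Pi \cap B_n^c$ while $|\Pi_S \cap B_n| = |\Pi \cap B_n| - |S|$. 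Hence $\Pi_S \notin A$ almost surely, giving mutual singularity.

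For the converse, suppose $\Pi$ is not rigid at some ball $B$. Then the conditional distribution of $|\Pi \cap B|$ given $\Pi \cap B^c$ is not a.s.\ a point mass, so there exist $k_1 < k_2$ and a Borel set $E \in \sigma(\Pi \cap B^c)$ with $\P(E) > 0$ on which both $\P(|\Pi \cap B| = k_i \mid \Pi \cap B^c)$ are positive. Set $j := k_2 - k_1$ and $F_k := \{\omega : \omega \cap B^c \in E,\ |\omega \cap B| = k\}$; let $\mu$ be the law of $\Pi$ and $\nu$ the law of $\Pi_S$. For any $S \subset \Z^d$ of size $j$, the event $\mathcal{B}_S := \{x + Y_x \in B\ \forall\, x \in S\}$ has positive probability by positivity of $\rho$; and on $\mathcal{B}_S$ in the coupling, $\Pi \cap B^c = \Pi_S \cap B^c$ and $|\Pi \cap B| = |\Pi_S \cap B| + j$, so $\{\Pi \in F_{k_2}\} \cap \mathcal{B}_S = \{\Pi_S \in F_{k_1}\} \cap \mathcal{B}_S$. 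Summing over all $j$-subsets $S$ of the random set $L := \{x \in \Z^d : x + Y_x \in B\}$ yields
\[
\sum_{|S| = j} \P(\Pi \in F_{k_2},\ S \subset L) \,=\, \binom{k_2}{j}\,\mu(F_{k_2}) > 0,
\]
so some $S$ satisfies $\P(\Pi \in F_{k_2}, \mathcal{B}_S) > 0$; independence of $(Y_x)_{x \in S}$ from $\Pi_S$ then gives $\nu(F_{k_1}) > 0$.

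The main obstacle is upgrading the positivity $\mu(F_{k_1}), \nu(F_{k_1}) > 0$ to the stronger statement that $\mu$ and $\nu$ are not mutually singular. For this I would use positivity of $\rho$ at the level of positions: conditional on $\Pi \cap B^c = \omega_{\mathrm{out}} \in E$ and $|\Pi \cap B| = k_1$, the law of the unordered tuple $\Pi \cap B$ is absolutely continuous on the symmetric power $B^{k_1}/S_{k_1}$, with density given by a sum over assignments of $k_1$ lattice points in $\Z^d$ to the positions, weighted by products of values of $\rho$; the analogous density under $\Pi_S$ restricts the sum to subsets of $\Z^d \setminus S$. Because $\rho$ is everywhere positive, every summand is strictly positive, so both densities are positive Lebesgue-a.e. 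Combined with the fact that on $\mathcal{B}_S$ the outside configurations of $\Pi$ and $\Pi_S$ agree, this produces a set of positive $\mu$- and positive $\nu$-measure on which the Radon--Nikodym derivative $d\nu/d\mu$ is positive, contradicting mutual singularity.
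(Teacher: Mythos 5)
Your forward direction is correct, and in fact cleaner than the paper's: the certificate set $A$ built from an exhausting sequence of balls shows directly that rigidity forces $\Pi$ and $\Pi_S$ to be mutually singular for every nonempty finite $S$, without using the positive-density hypothesis (the paper instead routes this direction through the equivalence ``not singular $\Rightarrow$ mutually absolutely continuous''). The set-up of your converse is also sound: the choice of $k_1<k_2$ and $E$, the sets $F_k$, and the positive-probability event $\{\Pi\in F_{k_2}\}\cap\mathcal{B}_S$ on which $\Pi_S\in F_{k_1}$ with the same outside configuration are all correctly justified.

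The gap is in the final step, which is where the converse actually lives. Positivity of the conditional densities of the inside points only compares the conditional laws of $\Pi\cap B$ and $\Pi_S\cap B$ given a \emph{common} outside configuration; it says nothing about the laws of the outside configurations themselves, and mutual singularity of $\mu$ and $\nu$ could a priori be carried entirely by the outside marginals (weighted by the conditional probability of seeing $k_1$ inside points). Moreover, the event you propose to ``combine'' with, $\{\Pi\in F_{k_2}\}\cap\mathcal{B}_S$, matches an outside configuration of $\Pi$ carrying $k_2$ inside points with an outside configuration of $\Pi_S$ carrying $k_1$ inside points; $F_{k_1}$ and $F_{k_2}$ are disjoint, so no positivity of $d\nu/d\mu$ follows from this as stated. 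What is missing is the pivot that uses the defining property of $E$: on $E$ both $\P(|\Pi\cap B|=k_1\mid \Pi\cap B^c)$ and $\P(|\Pi\cap B|=k_2\mid \Pi\cap B^c)$ are positive, so the outside law of $\Pi$ weighted by the count-$k_1$ probability and the one weighted by the count-$k_2$ probability are mutually absolutely continuous on $E$. With that, the nonzero sub-measure $\alpha(\cdot)=\P\bigl(\Pi\cap B^c\in\cdot,\ \Pi\in F_{k_2},\ \mathcal{B}_S\bigr)$ is dominated by the count-$k_2$-weighted outside law of $\mu$ and (since the outsides agree on this event) by the count-$k_1$-weighted outside law of $\nu$, hence is absolutely continuous with respect to the count-$k_1$-weighted outside law of $\mu$ as well; disintegrating $\mu|_{F_{k_1}}$ and $\nu|_{F_{k_1}}$ over the outside and then using your inside-density equivalence yields non-singularity. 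So your approach is completable with the ingredients you already have, but the concluding sentence is currently a non sequitur. The paper closes the same difficulty differently: it takes a conditionally independent copy $\Pi'$ with the same outside, finds lattice-site sets $S,S'$ with $|S|=|S'|+k$ realizing the two inside counts with positive probability, and uses the everywhere-positive density to add back $|S'|$ i.i.d.\ points, producing a coupling in which processes mutually absolutely continuous with $\Pi$ and with $\Pi_{S^*}$ agree exactly with positive probability.
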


It was shown in~\cite{HolSoo:10} that the perturbed lattice is deletion singular in dimension $d=1$ when the perturbations $Y_x$ have bounded first moment and in dimension $d=2$ when the perturbations have bounded second moment.  In contrast, we show that when $d\geq 3$, the question of deletion tolerance depends more delicately on the law of the perturbations; in particular, for Gaussian perturbations it exhibits a phase transition.

\begin{theorem}\label{t:gaussian}
Let $\Pi$ be the perturbed lattice in $\mathbb{Z}^d$ with   Gaussian $N_d(0,\sigma^2 I)$ perturbations.  For  $d \geq 3$ there exist critical variances $0 < \sigma_r(d) \leq \sigma_c(d)$ such that
\begin{itemize}
  \item If $\sigma>\sigma_c$ then $\Pi$ is deletion tolerant and is mutually absolutely continuous with respect to $\Pi_0$.
  \item If $0<\sigma<\sigma_c$ then $\Pi$ is deletion singular.
  \item If $0<\sigma<\sigma_r$ then $\Pi$ is rigid.
  \item If $\sigma>\sigma_r$ then $\Pi$ is non-rigid.
\end{itemize}
\end{theorem}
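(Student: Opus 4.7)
The plan is to treat both phenomena as monotone in $\sigma$ via a natural Gaussian coupling. For $\sigma'>\sigma$, set $Y'_x = Y_x + Z_x$ with $Z_x\sim N(0,((\sigma')^2-\sigma^2)I)$ independent of $\{Y_x\}$; then $\Pi'$ is the labeled image of $\Pi$ under the map that translates each point by independent Gaussian noise. Pushing forward preserves absolute continuity, so mutual absolute continuity of $\Pi$ and $\Pi_0$ (written $\Pi\sim\Pi_0$) is monotone nondecreasing in $\sigma$, as is non-rigidity. A zero-one law on the translation-invariant $\sigma$-algebra then yields the dichotomy that for every $\sigma$, either $\Pi\sim\Pi_0$ or $\Pi$ and $\Pi_0$ are mutually singular. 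Define
\[
\sigma_c := \inf\{\sigma>0 : \Pi\sim\Pi_0\},\qquad \sigma_r := \inf\{\sigma>0 : \Pi \text{ is non-rigid}\}.
\]
Proposition~\ref{p:rigid-k-tolerant} applied with $S=\{0\}$ shows that rigidity implies $\Pi$ and $\Pi_0$ are mutually singular, hence $\sigma_r\le\sigma_c$. What remains is to prove $\sigma_c<\infty$ and $\sigma_r>0$, and to upgrade the m.a.c.\ statement at $\sigma>\sigma_c$ to deletion tolerance at every $\Pi$-point via a Palm-calculus averaging argument.

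\textbf{Large $\sigma$: likelihood ratios on bounded windows.} Use the decomposition $\Pi=\Pi_0\sqcup\{Y_0\}$ with $Y_0\sim N(0,\sigma^2 I)$ independent of $\Pi_0$. On a bounded window $W$, the Radon-Nikodym derivative of $\Pi\cap W$ with respect to $\Pi_0\cap W$ can be expressed as a ratio of weighted permanents indexed by matchings between the relevant lattice sites and the observed points, with weights built from $f_\sigma$. Bound the second moment of this window density by expanding the square over pairs of matchings and decomposing their symmetric difference into cycles, which reduces the estimate to a series whose $\ell$-th term is an $\ell$-fold lattice convolution of the overlap kernel $f_{\sigma\sqrt{2}}(x)$. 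For $\sigma$ sufficiently large, the relevant lattice sums (closely related to the Green's function of a Gaussian random walk, and summable in $d\ge 3$) decay fast enough in $\ell$ that this series converges, yielding a uniform $L^2$ bound. Martingale convergence in $W$ then produces a global Radon-Nikodym density and gives $\Pi\sim\Pi_0$ for $\sigma$ above some threshold.

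\textbf{Small $\sigma$: rigidity and the main obstacle.} For small $\sigma$ each $Y_x$ is concentrated near the origin, and the labeling $x\mapsto x+Y_x$ is locally recoverable from $\Pi$ alone. Given $\Pi_{\mathrm{out}}(B)$, the plan is to reconstruct $|\Pi\cap B|$ through the (essentially unique) minimum-weight matching between nearby lattice sites and the observed points, then count the sites whose match lies in $B$. A Peierls-type contour estimate shows that discrepancies between the canonical matching and any alternative are supported on finite chains whose Gaussian cost is exponentially large in their length; for $\sigma$ below a dimension-dependent threshold these chains stay bounded almost surely, so that $|\Pi\cap B|$ becomes measurable with respect to $\Pi_{\mathrm{out}}(B)$. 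The principal technical obstacle of the theorem lies in the cycle-expansion estimate of the previous step: cycles of arbitrary length must be controlled simultaneously on the lattice, and the dimensional restriction $d\ge 3$ enters precisely through the summability of the Green's-function-style sums that govern convergence of the expansion---this is what forces $\sigma_c<\infty$ and produces the phase transition absent in $d\le 2$.
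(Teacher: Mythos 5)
Your skeleton (monotonicity in $\sigma$ via the Gaussian semigroup coupling, the singular/absolutely-continuous dichotomy, $\sigma_r\le\sigma_c$ via Proposition~\ref{p:rigid-k-tolerant}, reduction to proving $\sigma_c<\infty$ and $\sigma_r>0$) matches the paper's, but both substantive inputs are left as sketches whose key steps are missing and, as stated, would not go through. For large $\sigma$ you propose a second-moment bound on the Radon--Nikodym derivative of the \emph{unlabeled} window restriction, expanded over pairs of matchings and cycles, with $\ell$-th term an $\ell$-fold convolution of $f_{\sigma\sqrt{2}}$ controlled by transience in $d\ge 3$. Two problems: the window marginal of a perturbed lattice has no clean permanental density --- every site of $\mathbb{Z}^d$ may or may not send its point into the window, so the density is an infinite mixture over the unobserved set of contributing sites, and the combinatorial control of matchings/cycles (whose number grows at least like $(2d)^{\ell}$, with unbounded jumps) is exactly the hard part and is nowhere estimated; moreover the natural per-step factor is not the spread-out kernel $f_{\sigma\sqrt{2}}$ but a chi-square--type overlap such as $\int \bigl(g(x+e_i)/g(x)\bigr)^2 g(x)\,dx=e^{1/\sigma^2}$, which is close to $1$ (not small) for large $\sigma$; largeness of $\sigma$ only helps when played against an exponential bound on how often two transport paths coincide. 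The paper implements precisely this trade-off on the \emph{labeled} field $\{Y_x\}$: Proposition~\ref{p:walkTolerant} shifts the perturbations along a random oriented path drawn from a measure with exponential intersection tails, the product structure makes the second moment factor exactly into $\rho^{N}$ with $N$ the number of intersections of two independent paths, and the EIT property (available only for $d\ge3$, by~\cite{BPP:98}) closes the $L^2$ bound once $e^{1/\sigma^2}\theta<1$. That is where $d\ge3$ genuinely enters --- not through summability of Gaussian convolutions.

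For small $\sigma$, your minimum-weight-matching/Peierls sketch asserts that discrepancy chains ``stay bounded almost surely,'' but Gaussian displacements are unbounded at every $\sigma$, and what is actually needed is a bound, uniform over all infinite nearest-neighbour paths, on the averaged displacement cost --- i.e.\ a greedy-lattice-animal estimate. The paper's Lemma~\ref{l:greedyIntolerant} supplies exactly this: if $M_d(|Y_x|_1)<\tfrac12$ then the path functional $f(\cdot)$ takes values below $\tfrac12$ on $\Pi$ and above $\tfrac12$ on $\Pi_S$ a.s., giving $k$-deletion singularity for every $k$, and rigidity then follows from Propositions~\ref{p:k-equivalence} and~\ref{p:rigid-k-tolerant}. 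Your sketch neither identifies this uniform-over-paths input nor explains how $|\Pi\cap B|$ is to be recovered from $\Pi_{\hbox{out}}(B)$ alone, without seeing the points in $B$. Finally, your definition $\sigma_r=\inf\{\sigma:\Pi\text{ non-rigid}\}$ requires monotonicity of non-rigidity in $\sigma$, which does not follow from ``pushing forward preserves absolute continuity'' (rigidity is not an absolute-continuity statement); it becomes monotone only after rigidity is recast, via Proposition~\ref{p:rigid-k-tolerant}, as mutual singularity of $\Pi$ and $\Pi_S$ for all finite $S$, each such non-singularity being monotone under the coupling --- this is why the paper defines $\sigma_r=\inf_k\sigma_c(k,d)$. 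The bookkeeping is fixable, but the two main analytic steps are genuinely missing.
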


We conjecture that in fact $\sigma_c=\sigma_r$ and that for all i.i.d.\ perturbations, the perturbed lattice is rigid if and only if the perturbed lattice is deletion singular.  However, in Theorem~\ref{t:twoButNotOne} we show that for similar point processes these notions may differ.

Given the results of~\cite{HolSoo:10}, it is natural to ask if heavy tailed random variables with infinite means may be deletion tolerant.  In the case of $\alpha$-stable perturbations we give a complete characterization.

\begin{theorem}\label{t:stable}
Let $\Pi$ be a one dimensional perturbed lattice with symmetric $\alpha$-stable perturbations.  If $\alpha<1$ then the perturbed lattice $\Pi$ is deletion and insertion tolerant and mutually absolutely continuous with $\Pi_0$, while if $\alpha \geq 1$ then it is deletion singular and rigid.
\end{theorem}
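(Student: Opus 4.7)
\emph{Plan.} I would split the proof at $\alpha=1$: for $\alpha\ge 1$ establish rigidity of $\Pi$ in $d=1$, from which Proposition~\ref{p:rigid-k-tolerant} immediately yields deletion singularity, and for $\alpha<1$ prove insertion and deletion tolerance, and mutual absolute continuity $\Pi\sim\Pi_0$, via a Poisson comparison on bounded windows.

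For $\alpha>1$ the perturbations have finite first absolute moment, and the $d=1$ rigidity result of Holroyd--Soo \cite{HolSoo:10} applies verbatim. For $\alpha=1$ (Cauchy), which is the main obstacle, the naive Holroyd--Soo signed-flow statistic $\sum_k(\mathbbm 1[k+Y_k\le t]-\mathbbm 1[k\le t])$ has logarithmically divergent variance and must be regularized. I would exploit the exact symmetry of the Cauchy law by pairing terms $k\leftrightarrow 2t-k$ so that short-range contributions cancel in expectation and in variance, together with a truncation of $Y_k$ at an intermediate scale $R^\theta$ to isolate the finitely many long-range jumps, each of which can be read off $\Pi_{\mathrm{out}}(B)$ by its large displacement. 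Evaluating the resulting reconstructed flow at the boundary of $B=[-R,R]$ pins down $|\Pi\cap B|$ exactly, giving rigidity.

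For $\alpha<1$, fix a bounded Borel set $V\subset\R$. Write $p_k:=\int_V f_\alpha(v-k)\,dv\asymp|V|(1+|k|)^{-(1+\alpha)}$; then $\sum_k p_k<\infty$, and the intensity of $\Pi\cap V$ is $\lambda(v):=\sum_k f_\alpha(v-k)$, a positive $1$-periodic function bounded above and below on $V$. I would prove that $\mathcal L(\Pi\cap V)$ is mutually absolutely continuous with the Poisson point process $\Pi^P$ on $V$ of intensity $\lambda(v)\,dv$ by comparing Janossy densities: the Janossy density of $\Pi\cap V$ at a distinct tuple $(y_1,\dots,y_n)\in V^n$,
\[
\prod_{k\in\Z}(1-p_k)\sum_{(k_1,\dots,k_n)\text{ distinct}}\prod_{i=1}^n\frac{f_\alpha(y_i-k_i)}{1-p_{k_i}},
\]
is strictly positive and finite everywhere on $V^n$ (using $\sum p_k<\infty$ and the everywhere-positivity of $f_\alpha$), matching the positivity and finiteness of the Poisson Janossy $e^{-\int_V\lambda}\prod_i\lambda(y_i)$. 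Since a Poisson point process with bounded positive intensity is both insertion and deletion tolerant and these properties propagate through mutual absolute continuity, $\Pi$ inherits both (insertions or deletions in $V$ leave $\Pi\cap V^c$ untouched).

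To deduce $\Pi\sim\Pi_0$: write $\Pi=\Pi_0\cup\{Y_0\}$ with $Y_0$ independent of $\Pi_0$ and stable. Deletion tolerance of $\Pi$ applied to the $\Pi$-point $Y_0$ gives $\mathcal L(\Pi_0)\ll\mathcal L(\Pi)$; conversely, if $\P(\Pi_0\in A)=0$ then insertion tolerance of $\Pi_0$ against a uniform $U$ on bounded $V$ forces $\P(\Pi_0\cup\{u\}\in A)=0$ for Lebesgue-a.e.\ $u$, and integrating against $f_\alpha$ yields $\P(\Pi\in A)=0$. The hardest step remains the Cauchy reconstruction, where without exploiting the special symmetry of $\alpha=1$ the borderline divergence of the natural flow statistic defeats any direct variance-based argument.
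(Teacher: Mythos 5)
Your high-level split ($\alpha>1$ via Holroyd--Soo plus Proposition~\ref{p:rigid-k-tolerant}, $\alpha<1$ via heavy tails, $\alpha=1$ as the borderline case) matches the paper, but both of your hard steps have genuine gaps. For $\alpha<1$, comparing Janossy densities of $\Pi\cap V$ with a Poisson process on a bounded window $V$ only controls the \emph{marginal} law of $\Pi\cap V$, and the inference ``tolerance propagates through mutual absolute continuity, since insertions or deletions in $V$ leave $\Pi\cap V^c$ untouched'' is invalid because $\Pi\cap V$ and $\Pi\cap V^c$ are strongly dependent. Indeed, in the rigid regime (e.g.\ Gaussian perturbations in $d=1$, or $\alpha\ge 1$ here) the window marginal still has strictly positive Janossy densities of every order, hence is equivalent to a Poisson marginal in exactly your sense, yet the full process is insertion singular: given $\Pi\cap V^c$ the number of points in $V$ is determined, so adding an independent uniform point produces a singular law. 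Tolerance is a statement about the conditional structure of the configuration in $V$ given the outside, and your window comparison never touches it. The paper's Theorem~\ref{t:general} instead builds a global coupling: it decomposes each perturbation's law over dyadic annuli $H_i$ into a mixture containing a uniform component with probability $p_i\asymp 2^{i(d-\alpha)}$, and uses $\alpha<2d$ (here $1+\alpha<2$) to guarantee of order $2^{i(2d-\alpha)}$ resampleable points per annulus, so that the extra point created by deleting the origin can be absorbed annulus by annulus at summable total-variation cost (binomial shift estimates); non-singularity of $\Pi$ and $\Pi_0$ is then upgraded by Proposition~\ref{p:equivalence}. (Your final derivation of $\Pi\sim\Pi_0$ also quietly invokes insertion tolerance of $\Pi_0$, which you never established, though Proposition~\ref{p:equivalence} makes this step unnecessary once non-singularity is known.)

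For $\alpha=1$ your sketch is not yet an argument. The claim that the finitely many long-range jumps ``can be read off $\Pi_{\mathrm{out}}(B)$ by their large displacement'' presupposes knowledge of which lattice site each observed point came from; that labelling is not a function of the unlabelled configuration, and recovering it is essentially the problem you are trying to solve. The $k\leftrightarrow 2t-k$ pairing and the truncation at scale $R^\theta$ are not developed to the point where one can check that the logarithmic divergence actually cancels, and your parenthetical claim that the borderline divergence ``defeats any direct variance-based argument'' is contradicted by the paper: it uses the triangular-kernel statistic $\Psi_m(\Pi)=\frac1m\int_{-m}^m(m-|z|)\,\Pi(dz)$, whose variance is $O(1)$ (not $o(1)$), and then averages over widely separated scales $m_\ell=e^{2\ell^2}$ so that the covariances decay and $\frac1n\sum_{\ell\le n}\bigl(\Psi_{m_\ell}(\Pi)-\E\Psi_{m_\ell}(\Pi)\bigr)\to 0$ in probability, while the same average computed from $\Pi_S$ shifts by $-|S|$. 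This yields mutual singularity of $\Pi$ and $\Pi_S$ for every finite $S$, hence $k$-deletion singularity for all $k$ and rigidity via Propositions~\ref{p:k-equivalence} and~\ref{p:rigid-k-tolerant}, with no use of the special symmetry of the Cauchy law. To repair your write-up you would either have to carry out the multi-scale averaging as in the paper or give a genuinely complete version of the symmetrized/truncated reconstruction, including a proof that the required information is measurable with respect to $\Pi_{\mathrm{out}}(B)$.
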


In Section~\ref{s:generalPerturbation} we give a more general categorization of which perturbations give rise to deletion tolerance and rigidity.

\subsection{Absolute Continuity}
Assuming that the distribution of the perturbations has a density which is everywhere positive, we establish equivalences between the different notions of deletion and insertion tolerance.
\begin{proposition}\label{p:equivalence}
If the distribution of $Y_x$ has a density which is everywhere positive then the following are equivalent
\begin{enumerate}
\item The perturbed lattice is deletion tolerant.\label{it:delTol}
\item The perturbed lattice is insertion tolerant.\label{it:insTol}
\item The perturbed lattice is not deletion singular. \label{it:delSing}
\item The perturbed lattice is not insertion singular. \label{it:insSing}
\item The measures $\Pi$ and $\Pi_0$ are mutually absolutely continuous.\label{it:mutAbs}
\item The measures $\Pi$ and $\Pi_0$ are not mutually singular.\label{it:delSingOld}
\end{enumerate}
\end{proposition}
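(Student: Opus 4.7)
The plan is to establish the equivalence via a few substantive implications, with the rest following from definitions. The implications $(1)\Rightarrow(3)$, $(2)\Rightarrow(4)$, and $(5)\Rightarrow(6)$ are immediate, since any form of tolerance rules out mutual singularity and mutual absolute continuity implies non-singularity. The substantive work consists of the dichotomy $(6)\Rightarrow(5)$, the derivations of tolerance from $(5)$ (namely $(5)\Rightarrow(1)$ and $(5)\Rightarrow(2)$), and the reductions of non-singularity for arbitrary deletions or insertions to non-singularity with $\Pi_0$ (namely $(3)\Rightarrow(6)$ and $(4)\Rightarrow(6)$). The main technical ingredients are the $\mathbb{Z}^d$-shift invariance of $\mathcal{L}(\Pi)$, the translation identity $\Pi_x\stackrel{d}{=}\Pi_0+x$, the everywhere positive density of the $Y_x$, and an ergodic dichotomy.

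The deletion-side implications $(3)\Rightarrow(6)$ and $(5)\Rightarrow(1)$ share a common argument. Since the density of $Y_x$ is everywhere positive, coincidences $x+Y_x=y+Y_y$ for distinct lattice sites have probability zero; thus any $\Pi$-point $Z$ almost surely determines a unique random site $X\in\mathbb{Z}^d$ with $Z=X+Y_X$ and $\Pi\setminus Z=\Pi_X$. The decomposition $\P(\Pi\setminus Z\in A)=\sum_{x\in\mathbb{Z}^d}\P(\Pi_x\in A,\,X=x)$ over the countable variable $X$, combined with $\Pi_x\stackrel{d}{=}\Pi_0+x$ and the $\mathbb{Z}^d$-shift invariance of $\mathcal{L}(\Pi)$, transfers both non-singularity and absolute continuity between $\mathcal{L}(\Pi\setminus Z)$ versus $\mathcal{L}(\Pi)$ and $\mathcal{L}(\Pi_0)$ versus $\mathcal{L}(\Pi)$, yielding $(3)\Rightarrow(6)$ and $(5)\Rightarrow(1)$ respectively. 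The insertion-side implications $(4)\Rightarrow(6)$ and $(5)\Rightarrow(2)$ would follow from a parallel analysis, using the fact that a uniform law on $V$ is absolutely continuous with respect to $\mathcal{L}(Y_0)$ (again by everywhere-positive density); the key intermediate step is $\mathcal{L}(\Pi_0\cup\{U\})\ll\mathcal{L}(\Pi)$, obtained by viewing the insertion of $U$ as a reweighted perturbation at site $0$, after which the deletion-case bookkeeping handles the remaining discrepancy between $\Pi\cup U$ and $\Pi_0\cup\{U\}$.

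The main obstacle is the dichotomy $(6)\Rightarrow(5)$. I would carry it out through the Lebesgue decomposition $\mathcal{L}(\Pi_0)=\mu_{\text{ac}}+\mu_{\text{s}}$ with respect to $\mathcal{L}(\Pi)$, with Radon--Nikodym derivative $R=d\mu_{\text{ac}}/d\mathcal{L}(\Pi)$, and use the ergodicity of $\mathbb{Z}^d$-shifts on $\Pi$ (inherited from the i.i.d.\ structure of the family $(Y_x)$) to show that $R$ is either $\mathcal{L}(\Pi)$-a.s.\ zero or $\mathcal{L}(\Pi)$-a.s.\ positive. Hypothesis $(6)$ rules out the former, yielding $\mathcal{L}(\Pi_0)\ll\mathcal{L}(\Pi)$; a symmetric argument using the insertion of an independent $Y_0$-distributed atom gives the reverse inclusion, hence $(5)$. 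The delicate step is setting up the shift-invariance needed for the dichotomy: since $\mathcal{L}(\Pi_0)$ singles out site $0$ and is not itself shift-invariant, one must work with the translates $R_x=d\mathcal{L}(\Pi_x)_{\text{ac}}/d\mathcal{L}(\Pi)$, related to one another by translation, and argue that the event $\{R_x=0\text{ for all }x\in\mathbb{Z}^d\}$ lies in the invariant $\sigma$-algebra and hence has $\mathcal{L}(\Pi)$-probability $0$ or $1$ by ergodicity.
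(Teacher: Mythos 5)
Your outer equivalences (the countable decomposition over the random site $X$ with $Z=X+Y_X$, the translation identity, and the positive-density comparison of an inserted uniform point with $Y_0$) match the paper's arguments and are fine. The genuine problem is the key step $(6)\Rightarrow(5)$. First, the dichotomy you state is not what your invariant-event argument gives: knowing that $\{R_x=0\ \forall x\}$ has $\mathcal{L}(\Pi)$-probability $0$ or $1$ does not show that $R=R_0$ is a.s.\ zero or a.s.\ positive; in the nontrivial case you only learn that a.s.\ \emph{some} translate $R_x$ is positive. This can be repaired: it yields $\mathcal{L}(\Pi)\ll\sum_x 2^{-|x|}\mathcal{L}(\Pi_x)$, and since the $\mathcal{L}(\Pi_x)$ are mutually absolutely continuous in $x$ (write $\Pi_x=\Pi_{\{x,y\}}\cup\{y+Y_y\}$ and $\Pi_y=\Pi_{\{x,y\}}\cup\{x+Y_x\}$ and use the everywhere-positive density), one gets $\mathcal{L}(\Pi)\ll\mathcal{L}(\Pi_0)$. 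But note the direction: positivity of $R$ only controls sets that $\mathcal{L}(\Pi_0)$ charges with mass zero, so this gives $\mathcal{L}(\Pi)\ll\mathcal{L}(\Pi_0)$, \emph{not} $\mathcal{L}(\Pi_0)\ll\mathcal{L}(\Pi)$ as you claim; nothing in the argument touches the singular part of $\mathcal{L}(\Pi_0)$ in its Lebesgue decomposition.

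The missing inclusion $\mathcal{L}(\Pi_0)\ll\mathcal{L}(\Pi)$ is exactly the one deletion tolerance needs, and your ``symmetric argument using the insertion of an independent $Y_0$-distributed atom'' does not supply it. Applying the insertion kernel at site $0$ to both measures sends $\mathcal{L}(\Pi_0)$ to $\mathcal{L}(\Pi)$ and $\mathcal{L}(\Pi)$ to $\mathcal{L}(\Pi\cup\{U\})$, so running your ergodic argument on the insertion family yields at best $\mathcal{L}(\Pi)\ll\mathcal{L}(\Pi\cup\{U\})$ --- a statement one level up, from which one cannot descend, since ``delete the inserted point'' is not a measurable map of the configuration. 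Shift-ergodicity is of limited use here precisely because $\mathcal{L}(\Pi_0)$ is not shift-invariant, and one cannot assert that shift-invariant events have $\mathcal{L}(\Pi_0)$-probability $0$ or $1$ without something like the conclusion already in hand. The paper proves this direction by a different zero-one law: it takes a set $A$ carrying the absolutely continuous part, with $\Q(A)=1$ and $0<\Q_0(A)<1$, and shows by a conditional coupling (comparing the law of $\Pi_0$ given $\Pi_{S\cup\{0\}}=b$ with that of $\Pi$ given $\Pi_{\hat{S}}=b$, using the everywhere-positive density) that $\{\Pi_0\in A\}$ is a tail event of the i.i.d.\ field $\{Y_x\}$, so Kolmogorov's zero-one law forces $\Q_0(A)\in\{0,1\}$, a contradiction. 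Some argument of this kind, giving a zero-one statement under the law of $\Pi_0$ itself rather than under the invariant measure, is what your sketch is missing.
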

We will also consider the case where $k$ points are inserted or deleted.   Generalizing the earlier definitions, a point process $\Pi$ is {\bf $k$-deletion tolerant} if for any distinct $\Pi$ points $Z_1,\ldots,Z_k \in\Pi$, $\Pi \setminus Z$ is absolutely continuous with respect to  $\Pi$ and {\bf $k$-deletion singular} if they are always mutually singular.  We say that $\Pi$ is {\bf $k$-insertion tolerant} if for any  Borel set $V \subset \mathbb{R}^d$ with Lebesgue measure $\mathcal{L}(V)\in(0,\infty)$, if $U_1,\ldots,U_k$ are independent points uniform in $V$ and independent of $\Pi$ then $\Pi \cup \{U_1,\ldots,U_k\}$ is absolutely continuous with respect to  $\Pi$.  If $\Pi$ and $\Pi \cup \{U_1, \ldots,U_k\}$ are mutually singular then we say $\Pi$ is {\bf $k$-insertion singular}.

Perhaps surprisingly, there exists a translation-invariant point process $\hat{\Pi}$ that is  deletion singular but not rigid. In fact, such a process can be $2$-deletion tolerant; that is, removing a single point from $\hat{\Pi}$ yields a singular measure, but removing any two points from  $\hat{\Pi}$ yields a process which is absolutely continuous to the original!

We construct $\hat{\Pi}$ as the  union of two correlated perturbed lattices. For  $d\geq 3$ and $0<\delta<\sigma$, let $Y_x$ be i.i.d.\  $N_d\Bigl(0,(\sigma^2-\delta^2)I\Bigr)$ variables. For $(x,i)\in \mathbb{Z}^d\times \{1,2\}$ let $Y'_{x,i}$ be i.i.d. $N_d(0,\delta^2I)$ variables.  Setting $\hat{Y}_{x,i}=Y_x+Y'_{x,i}$, we define the point process $\hat{\Pi}=\{x+\hat{Y}_{x,i}:(x,i)\in \mathbb{Z}^d\times \{1,2\}\}$.  The next theorem is proved in Section~\ref{s:twoButNotOne}.
\begin{theorem}\label{t:twoButNotOne}
 There exist $0<\delta<\sigma$ such that $\hat{\Pi}$ is deletion singular but $2$-deletion tolerant and hence non-rigid.
\end{theorem}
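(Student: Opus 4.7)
My plan is to decompose the perturbations as $\hat{Y}_{x,i}=M_x+(-1)^{i+1}D_x$, where $M_x\sim N_d(0,(\sigma^2-\delta^2/2)I)$ and $D_x\sim N_d(0,(\delta^2/2)I)$ are independent. Then the midpoint process $\Pi_M:=\{x+M_x:x\in\mathbb Z^d\}$ is a Gaussian perturbed lattice of variance $\sigma_M^2:=\sigma^2-\delta^2/2$, whose points carry the independent displacement marks $D_x$ that generate the two daughters $x+M_x\pm D_x$ comprising $\hat\Pi$. I would choose $\sigma$ and $\delta$ so that $\sigma_M>\sigma_c(d)$ and $\delta$ is small relative to the lattice spacing. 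By Theorem~\ref{t:gaussian} and the $k$-point analogue of Proposition~\ref{p:equivalence} applied to $\Pi_M$, the midpoint process is then mutually absolutely continuous with $\Pi_{M,S}$ for every finite $S\subset\mathbb Z^d$, and $k$-insertion tolerant for every $k$.

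\textbf{$2$-deletion tolerance.} Given distinct $Z_1,Z_2\in\hat\Pi$, condition on the hidden lattice labels $(x_j,i_j)$ of $Z_j$. If $x_1=x_2$, then $\{Z_1,Z_2\}$ is exactly the pair at $x_1$ and $\hat\Pi\setminus\{Z_1,Z_2\}=\hat\Pi_{\{x_1\}}$; using that $\hat\Pi$ is a measurable function of $\Pi_M$ and the $(D_y)_y$, and that $\Pi_{M,\{x_1\}}\ll\Pi_M$, this pushes forward to $\hat\Pi_{\{x_1\}}\ll\hat\Pi$. If $x_1\neq x_2$, decompose $\hat\Pi\setminus\{Z_1,Z_2\}=\hat\Pi_{\{x_1,x_2\}}\cup\{W_1,W_2\}$ where $W_j:=x_j+\hat Y_{x_j,3-i_j}$ is the surviving daughter at $x_j$. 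Applying the preceding case twice gives $\hat\Pi_{\{x_1,x_2\}}\ll\hat\Pi$, and then the insertion of the two extra points $W_1,W_2$ (whose conditional joint density is a positive Gaussian) is absorbed by $2$-insertion tolerance, obtained via the $k$-point analogue of Proposition~\ref{p:equivalence} together with the fact that any positive Gaussian density is dominated on a large ball by a multiple of a uniform density. Transitivity of absolute continuity then gives $\hat\Pi\setminus\{Z_1,Z_2\}\ll\hat\Pi$; integrating over the conditioning on labels yields unconditional absolute continuity.

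\textbf{$1$-deletion singularity.} This is the principal obstacle. With $\delta$ very small, the two daughters within each pair lie at distance of order $\delta$, whereas distinct pair-midpoints in $\Pi_M$ are typically of order $1$ apart, so near-collisions of different pairs form a sparse, summable exception. The strategy is to construct a translation-equivariant, deterministic matching algorithm $\pi$ on locally finite point sets in $\mathbb R^d$ (for instance, iteratively applying a stable matching at progressively finer scales and removing matched pairs as one proceeds) and exhibit an almost-sure invariant of $\pi$ (such as the ergodic density of $\pi$-unmatched points, or a limiting parity computed across a canonical sequence of shells) that takes different values under $\hat\Pi$ and $\hat\Pi\setminus Z$: the orphaned partner of $Z$ initiates a matching cascade whose effect on the invariant is quantifiable. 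The crux of the argument is to design $\pi$ to be almost-surely well-defined in the presence of the rare near-collisions and to propagate the effect of one missing point through the cascade to produce an almost-sure shift in the invariant.

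\textbf{Non-rigidity.} This is immediate from the pair case of $2$-deletion tolerance: $\hat\Pi_{\{0\}}\ll\hat\Pi$, so $\hat\Pi$ and $\hat\Pi_{\{0\}}$ are not mutually singular, contradicting the condition for rigidity in the natural analogue of Proposition~\ref{p:rigid-k-tolerant} for the doubled perturbed lattice.
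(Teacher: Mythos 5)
Your decomposition into midpoints $M_x$ and displacements $D_x$ is a legitimate reparametrization of the paper's construction, and your treatment of $2$-deletion tolerance and non-rigidity follows essentially the paper's route: tolerance of the underlying lattice of pair centres (variance above $\sigma_c$), pushed forward to pair removal in $\hat{\Pi}$, with the mixed case $x_1\neq x_2$ handled by equivalence/insertion arguments (the paper is equally brief here; note only that your step $\hat{\Pi}_{\{x_1,x_2\}}\ll\hat{\Pi}$ quietly uses two-site deletion tolerance of the base lattice, which in the paper is only guaranteed above a possibly different threshold $\sigma_c(2,d)$ --- this is avoidable by comparing instead with $\hat{\Pi}_{\{x_2\}}$ and using that the joint law of the two reinserted points and of the pair at $x_1$ both have everywhere positive densities on $\mathbb{R}^{2d}$).

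The genuine gap is the deletion-singularity half, which is the heart of the theorem and which you explicitly leave as a strategy ("the crux of the argument is to design $\pi$\dots"). Nothing is constructed or estimated: no matching rule $\pi$ is defined, no invariant is shown to be almost surely constant, and no proof is given that one missing point shifts it. Moreover the candidate invariants you name are problematic: the ergodic density of unmatched points is $0$ both for $\hat{\Pi}$ and for $\hat{\Pi}\setminus Z$ (a single orphan, or any finite cascade of re-matchings, has density zero), so it cannot distinguish the two laws; and a "limiting parity across shells" amounts to a mod-$2$ rigidity statement for $\hat{\Pi}$, which is itself a substantial claim needing proof --- especially delicate because $\hat{\Pi}$ is \emph{not} rigid, so the distinguishing statistic must survive unbounded fluctuations of the actual counts. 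By contrast, the paper does not build an equivariant matching at all: it assumes a coupling on which $\hat{\Pi}=\hat{\Pi}^\star_{(0,1)}$ with positive probability, uses the label bijections $W,W^\star$ to build an alternating chain of matched points started from the widowed partner of the deleted point, and then kills the coupling event by a union bound over all possible chains of length $n$ (entropy $(2d)^{L_n}$), showing that on the coupling event either the total displacement along the chain is atypically large (events $\mathcal{I}_n,\mathcal{I}^\star_n$, unlikely at Gaussian scale $\sigma$ once $L_n\geq C_2 n$) or at least half the steps involve an intra-pair gap of size $\geq C_1$ or a cross-process near-coincidence within $C_1$ (events $\mathcal{J}_n,\mathcal{J}^\star_n$, unlikely once $\delta$ is small and $C_1$ is chosen small relative to $\sigma$). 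Supplying quantitative estimates of this kind --- or a complete construction and analysis of your matching invariant --- is exactly what is missing from your proposal, so as written it does not prove deletion singularity.
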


\subsection{Exponential Intersection Tails property}
We say that a measure $\eta$ on oriented paths in the lattice has {\bf Exponential Intersection Tails} with parameter $0< \theta <1$, denoted $EIT(\theta)$, if for some $C>0$,
\[
\eta\times\eta\Big\{(\gamma,\gamma'):|\gamma\cap\gamma'|\geq n \Big\} \leq C\theta^n.
\]
The uniform measure on oriented paths on $\mathbb{Z}^d$ has Exponential Intersection Tails for some $\theta<1$ when $d\geq 4$ but not when $d=3$.  Moreover, in~\cite{BPP:98} a measure on oriented paths in $\mathbb{Z}^3$ was constructed with Exponential Intersection Tails while it was shown that no such measure exists when $d\leq 2$.

\section{Proof of Theorem~\ref{t:gaussian}}

In this section we establish Theorem~\ref{t:gaussian} by first proving results about more general perturbations.
Let $e_1,\ldots,e_d$ denote the standard basis vectors in $\mathbb{R}^d$.

\begin{proposition}\label{p:walkTolerant}
For $d\geq 3$ there exists $\rho(d)>1$ such that the following holds.  Let $\Pi$ be a $d$-dimensional perturbed lattice with i.i.d.\ perturbations $\{Y_x\}_{x\in\mathbb{Z}^d}$ with density $g(x)$ which is everywhere positive.  If
\begin{equation}\label{e:muIcondition}
\max_{i} \int_{\mathbbm{R}^d} \left(\frac{g(x+e_i)}{g(x)} \right)^2 g(x) d x < \rho(d) \, ,
\end{equation}
then the perturbed lattice $\Pi$ is deletion  tolerant.
\end{proposition}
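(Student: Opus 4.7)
By Proposition~\ref{p:equivalence}, it suffices to show that the law of $\Pi_0$ is absolutely continuous with respect to the law of $\Pi$. The plan is to construct the Radon--Nikodym derivative by averaging a path-shift likelihood ratio over a random nearest-neighbor path satisfying the Exponential Intersection Tails property. The basic ingredient is a coupling: for any self-avoiding nearest-neighbor path $\gamma = (0=x_0, x_1, x_2, \ldots)$ in $\Z^d$ with increments $\Delta_i = x_{i+1} - x_i$, the independent field $\{\tilde Y_x\}$ with $\tilde Y_x$ having density $g$ for $x \notin \gamma$ and $\tilde Y_{x_i}$ having density $g(\,\cdot\, - \Delta_i)$ for $i \geq 0$ produces a point set $\{x + \tilde Y_x\}$ with the same law as $\Pi_0$, because the point at $x_i$ is distributed as $x_{i+1} + Y_{x_{i+1}}$. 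Formally the labeled-field Radon--Nikodym derivative is $\prod_i g(Y_{x_i} - \Delta_i)/g(Y_{x_i})$, an infinite product that need not converge; averaging over a random $\gamma$ will regularize it.

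For $d \geq 3$, fix an EIT measure $\eta$ on oriented self-avoiding nearest-neighbor paths from the origin with parameter $\theta = \theta(d) < 1$ (provided by~\cite{BPP:98}), and set $\rho(d) := 1/\theta$. Define the truncated $\eta$-averaged density
\[
\phi_n(Y) := \int \prod_{i=0}^{n-1} \frac{g(Y_{x_i} - \Delta_i)}{g(Y_{x_i})}\, d\eta(\gamma),
\]
so that $\E\phi_n = 1$. Expanding $\E[\phi_n^2]$ as a double $\eta$-integral and using independence across the $Y$-coordinates, the inner expectation factorizes over lattice sites: sites in only one of $\gamma, \gamma'$ contribute $1$, and each shared site $z = x_i = x'_j$ contributes $F_{\Delta_i, \Delta'_j} := \int g(y-\Delta_i)\,g(y-\Delta'_j)/g(y)\,dy$. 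Cauchy--Schwarz yields $F_{\Delta,\Delta'} \leq \max_\Delta F_{\Delta,\Delta}$, and a change of variables (combined with $g(-\,\cdot\,) = g(\,\cdot\,)$ for the symmetric perturbations of interest) identifies $F_{\Delta,\Delta}$ with the integral in~\eqref{e:muIcondition}. Thus $F_{\Delta,\Delta'} \leq \rho$, where $\rho$ is the left-hand side of~\eqref{e:muIcondition}, and
\[
\E[\phi_n^2] \leq \int\!\!\int \rho^{|\gamma \cap \gamma'|}\, d\eta(\gamma)\, d\eta(\gamma') \leq \frac{C}{1 - \rho\theta} < \infty
\]
uniformly in $n$ provided $\rho < \rho(d) = 1/\theta$.

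Since $\E\phi_n = 1$ and $\sup_n \E[\phi_n^2] < \infty$, extract a weak $L^2$ subsequential limit $\phi \geq 0$. To identify $\E[\phi \mid \Pi]$ with $dP_{\Pi_0}/dP_\Pi$, one tests against cylinder events $A$ of $\Pi$ localized in a bounded region $R$: a direct computation shows the process built from the truncated coupling agrees with $\Pi_0$ in $R$ except for a single tail point at $x_n + Y_{x_n}$, which leaves $R$ as $n \to \infty$ for $\eta$-typical $\gamma$. This forces $\E[\phi_n \one_A(\Pi)] \to P_{\Pi_0}(A) = \E[\phi \one_A(\Pi)]$, giving $dP_{\Pi_0}/dP_\Pi = \E[\phi \mid \Pi]$ and hence $\Pi_0 \ll \Pi$. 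The main obstacle is precisely this last step: the $L^2$ bound is a statement about densities on the labeled field space $(\R^d)^{\Z^d}$, whereas absolute continuity must be established on the space of unlabeled configurations, and one must verify that the truncation-tail contribution vanishes uniformly enough over $\eta$-typical paths to transfer the weak limit through the unlabeling map. The second-moment calculation is the substantive core of the argument; the $\pm$-direction bookkeeping in~\eqref{e:muIcondition} is a minor point that goes through by symmetry of $g$.
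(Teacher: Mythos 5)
Your core argument --- shifting the perturbations one step along an oriented path from the origin, averaging over a path measure with $EIT(\theta)$, and bounding the second moment of the averaged likelihood ratio by $\E\,\rho^{|\gamma\cap\gamma'|}<\infty$ when $\rho\theta<1$ --- is exactly the paper's, including the Cauchy--Schwarz reduction of the cross terms to the diagonal condition~\eqref{e:muIcondition} and the final appeal to Proposition~\ref{p:equivalence}. Where you differ is the endgame, and the step you flag as the ``main obstacle'' is precisely what the paper's formulation makes disappear: the paper shifts along the \emph{entire infinite} path, so that for each fixed $\gamma$ the shifted field $Y^\gamma$ generates a point process with exactly the law of $\Pi_0$ (no spurious tail point), and truncates in \emph{space} rather than along the path. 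Then $L_m$ is the Radon--Nikodym derivative of the restrictions to $\{|x|\le m\}$ of the two fixed measures $\tilde\nu=\int\nu^\gamma\,\eta(d\gamma)$ and $\nu$, hence an honest martingale; your $L^2$ bound gives uniform integrability, so $\tilde\nu\ll\nu$, and $\Pi_0\ll\Pi$ follows at once because absolute continuity is preserved under the measurable map from labelled fields to point configurations --- there is no ``unlabeling'' issue and no weak subsequential limit to identify. Your path-truncated version can in fact be completed: since the paths are oriented, $|x_n|_1=n$, so the extra point centered at $x_n$ hits a fixed bounded region with probability tending to $0$ \emph{uniformly} over $\gamma$, giving $\E[\phi_n\one_A(\Pi)]\to\P[\Pi_0\in A]$ for every cylinder event $A$; these events form a $\pi$-system generating the point-process $\sigma$-field and $\E\phi=1$, so the two finite measures agree. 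But this is strictly more work than the space-truncated martingale. Your remark on the $\pm e_i$ direction is also fine: the quantity that actually appears at a shared site is $\int g(y-e_j)g(y-e_{j'})/g(y)\,dy$, which matches~\eqref{e:muIcondition} after a change of variables when $g$ is symmetric (as in the Gaussian application); the paper's own write-up glosses over the same point.
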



\begin{proof}
 Let $\eta$ denote a distribution over oriented paths with $EIT(\theta)$ for some $\theta=\theta(d)\in(0,1)$. Choose $\rho(d)$ so that $\rho(d)\theta(d)<1$.

By the Cauchy-Schwartz inequality, the hypothesis of the theorem implies that
\begin{equation}\label{e:ratioL2Bound}
\max_{i,j} \int_{\mathbbm{R}^d} \left(\frac{g(x+e_i)}{g(x)} \right)\left(\frac{g(x+e_j)}{g(x)} \right) g(x) dx < \rho(d).
\end{equation}
  Let $\gamma=\{\gamma_0,\gamma_1,\ldots \}\subset\mathbb{Z}^d$ be an oriented walk on $\mathbb{Z}^d$ from the origin, (i.e. $\gamma_0=0$ and $\gamma_i-\gamma_{i-1}$ is a standard basis vector).  We define $Y^\gamma=\{Y^\gamma_x\}_{x\in\mathbb{Z}^d}$ to be a field of independent random variables distributed as
\[
Y^\gamma_x \stackrel{d}{=} \begin{cases}
Y_x + \gamma_{i+1}-\gamma_i &x\in\gamma, x=\gamma_i,\\
Y_x & x\not\in \gamma.
\end{cases}
\]
By construction the point $\gamma_i + Y^\gamma_{\gamma_i}$ has the same distribution as $\gamma_{i+1} + Y_{\gamma_{i+1}}$ so changing the perturbations in this way has the effect of shifting the points on $\gamma$ one step along the path.  As there is then a point centered at every vertex in $\mathbb{Z}^d$ except 0, it follows that the point process $\{x+Y^\gamma_x:x\in \mathbb{Z}^d\}$ has the same law as $\Pi_0$.

Denote by $\nu$ and $\nu^\gamma$  the distributions of $\ty$ and $Y^\gamma$, respectively. We would be done if $Y$ and $Y^\gamma$ were mutually absolutely continuous, but of course they are singular, since we have altered significantly the distribution of a specific infinite sequence of points.  However, using a similar argument to that of~\cite{ACHZ:08} and~\cite{BerPer:13}, this singularity can be smoothed away by averaging over $\gamma$.  Let $\Gamma$ denote a random path with the law $\eta$
satisfying $EIT(\theta)$, and define
$ \ty= Y^\Gamma $.
Thus if $\tnu$  denotes the distribution of $\ty$, then
\[
\tnu=\int \nu^\gamma \ \eta(d\gamma) \, .
\]
We will now show that when $\sigma$ is sufficiently large, the distributions of $\nu$ and $\tnu$ are mutually absolutely continuous.  Denote $Y(m)=\{Y_x\}_{|x|\leq m}$ and let $\nu$ denote the measure induced by $Y$ and $\nu_m$ the measure induced by $Y(m)$.  Define $\ty(m)$, $\tnu$, $\tnu_m$, $Y^\gamma(m)$, $\nu^\gamma$ and $\nu^\gamma_m$ similarly.  We let $L(y)$ denote the Radon-Nikodym derivative $\frac{d \tnu}{d\nu}$ and let $L_m(y)$ denote $\frac{d \tnu_m}{d\nu_m}$.  Observe that $L_m(Y_m)$ is a martingale which converges to $L(Y)$ almost surely.  If $L_m(Y_m)$ is an $L^2$ bounded martingale then $\tnu \ll \nu$ (see, e.g., \cite{MP:10}, Theorem 12.32 or ~\cite{Dur:10}.)  By definition of $L_m$,
\begin{align}\label{e:LM2expression}
\E [L_m(Y_m)]^2 &= \int \left[L_m(y)\right]^2 \nu_m(dy)\nonumber\\
&= \int \left[\int \frac{d\nu^\gamma_m}{d\nu_m}(y) \eta_m(d\gamma) \right]^2 \nu_m(dy)\nonumber\\
&= \int \int \int \frac{d\nu^\gamma_m}{d\nu_m} (y) \frac{d\nu_m^{\gamma'}}{d\nu_m} (y) \eta_m(d\gamma) \eta_m(d\gamma') \nu_m(dy)\nonumber\\
&= \int \int \int \frac{d\nu^\gamma_m}{d\nu_m} (y) \frac{d\nu_m^{\gamma'}}{d\nu_m} (y) \nu_m(dy) \eta_m(d\gamma) \eta_m(d\gamma') .
\end{align}
For fixed $\gamma$ the measure $\nu^\gamma$ is a product measure, so
\begin{equation}\label{e:LMproduct}
\int \frac{d\nu_m^\gamma}{d\nu_m} (y) \frac{d\nu_m^{\gamma'}}{d\nu_m} (y) \nu_m(dy)=\prod_{x:\in\mathbb{Z}^d:|x|\leq m} \int_{\mathbb{R}^d}
\frac{d\nu_{m,x}^\gamma}{d\nu_{m,x}} (y_x) \frac{d\nu_{m,x}^{\gamma'}}{d\nu_{m,x}} (y_x) \nu_{m,x}(dy_x)\, ,
\end{equation}
where $\nu_{m,x}=\mu$ is the distribution of $Y_x$ which is simply $\mu$.  If $x\not\in\gamma$, then $\frac{d\nu_{m,x}^\gamma}{d\nu_{m,x}}=1$ and hence
\begin{align*}
\int_{\mathbb{R}^d}
\frac{d\nu_{m,x}^\gamma}{d\nu_{m,x}} (y_x) \frac{d\nu_{m,x}^{\gamma'}}{d\nu_{m,x}} (y_x) \nu_{m,x}(dy_x)
&= \int_{\mathbb{R}^d} \frac{d\nu_{m,x}^{\gamma'}}{d\nu_{m,x}} (y_x) \nu_{m,x}(dy_x)\\
&=\int_{\mathbb{R}^d} \nu^{\gamma'}_{m,x}(dy_x) =1 \,.
\end{align*}
 A similar result holds when $x\not\in\gamma'$, so it remains to consider $x\in\gamma\cap\gamma'$.  In this case $x=\gamma_{|x|}=\gamma'_{|x|}$ and for some $1\leq j,j'\leq d$, we have  $e_j = \gamma_{|x|+1}-\gamma_{|x|}$ and $e_{j'} = \gamma'_{|x|+1}-\gamma'_{|x|}$.  Then by definition of $\tnu_{m,x}$ and equation~\eqref{e:ratioL2Bound} we have that
\begin{align*}
\int_{\mathbb{R}^d}
\frac{d\nu_{m,x}^\gamma}{d\nu_{m,x}} (y_x) \frac{d\nu_{m,x}^{\gamma'}}{d\nu_{m,x}} (y_x) \nu_{m,x}(dy_x)
&= \int_{\mathbb{R}^d}
\left(\tfrac{g(x+e_j)}{g(x)} \right)\left(\tfrac{g(x+e_{j'})}{g(x)} \right) g(x) dx\leq \rho.
\end{align*}
Defining $N=N_{\gamma,\gamma'}=|\gamma \cap \gamma'|$ and substituting in equation~\eqref{e:LMproduct} we have that
\[
\int \frac{d\nu_m^\gamma}{d\nu_m} (y) \frac{d\nu_m^{\gamma'}}{d\nu_m} (y) \nu_m(dy) \leq \rho^{N}
\]
and so by equation~\eqref{e:LM2expression}
\[
\sup_m \E [L_m(Y_m)]^2 \leq \E \rho^{N} < \infty \, ,
\]
by the $EIT(\theta)$ assumption on $\eta$.
It follows that $L_m(Y_m)$ converges to $L(Y)$ almost surely which is finite $\nu$-almost everywhere and hence that $\tnu$ is absolutely continuous with respect to $\nu$. Since $\tnu$ generates the point process $\Pi_0$ it follows that $\Pi_0$ is absolutely continuous with respect to $\Pi$.  The result then follows by Proposition~\ref{p:equivalence}.
\end{proof}

\subsection{Deletion intolerance for small perturbations}

In this section we show that if the perturbations are small enough then we have deletion intolerance.
Let $\gamma=(u_0,u_1,\ldots)$ denote a nearest neighbor path in $\Z^d$ with $u_0=0$ and let $\gamma_n = (u_0,\ldots,u_n)$.  For an i.i.d. field $\{Y_u\}_{u\in \Z^d}$ let $M_{n,d}$ denote
\[
M_{n,d} := \sup_{\gamma} \sum_{u\in \gamma_n} Y_u.
\]
and
\[
M_d := \limsup \frac1n M_{n,d}.
\]
Since $M_d$ is not affected by changing a finite number of $Y_u$ it is almost surely constant depending only on the distribution of $Y_u$ so we will denote this constant as $M_d(Y)$. A simple union bound over paths implies that $M_d$ is finite when $Y$ is Gaussian while Theorem~1 of~\cite{GanKes:94} implies that $M_d(Y)$ is finite provided that
\begin{equation}\label{e:greedyCondition}
\E Y^d \log^{d+\epsilon} Y < \infty.
\end{equation}
We have the following result when $M_d(|Y|_1)<\frac12$.

\begin{lemma}\label{l:greedyIntolerant}
Suppose that $Y_x$ has an absolutely continuous distribution with respect to Lebesgue measure on $\mathbb{R}^d$, the $\ell^1$ norm $|Y_x|_1$ satisfies equation~\eqref{e:greedyCondition} and $M_d(|Y_x|_1)<\frac12$. Then the perturbed lattice with perturbations $\{Y_x\}_{x\in\mathbb{Z}^d}$ is $k$-deletion singular for all $k\geq 1$.
\end{lemma}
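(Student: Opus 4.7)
The plan is to construct a measurable statistic $F$ of the point configuration that takes strictly smaller values under $\Pi$ than under $\Pi_S$ almost surely, for every finite $S\subset\Z^d$, establishing mutual singularity and hence $k$-deletion singularity for all $k$. For a configuration $P\subset\R^d$ and a bijection $\phi:\Z^d\to P$, define the greedy cost
\[
C(\phi) := \sup_\gamma \limsup_{n\to\infty} \frac{1}{n}\sum_{u\in\gamma_n} |\phi(u)-u|_1,
\]
where $\gamma$ ranges over nearest-neighbor paths in $\Z^d$ from the origin, and set $F(P):=\inf_\phi C(\phi)$, with the infimum restricted to the countable family of bijections induced by a canonical enumeration of $P$ (say by $\ell^1$-norm with lexicographic tie-breaking) to ensure measurability. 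The true matching $\phi_*(x):=x+Y_x$ gives $|\phi_*(u)-u|_1=|Y_u|_1$ and $C(\phi_*)=M_d(|Y_0|_1)<1/2$, so $F(\Pi)<1/2$ almost surely.

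For the lower bound on $F(\Pi_S)$, fix any bijection $\phi:\Z^d\to\Pi_S$; it corresponds to a bijection $\sigma:\Z^d\to\Z^d\setminus S$ with $\phi(x)=\sigma(x)+Y_{\sigma(x)}$. Choose $s\in S$; since $s$ has no preimage under $\sigma$, its forward iterates $w_0:=s$, $w_{i+1}:=\sigma(w_i)$ are all distinct lattice points. By the triangle inequality,
\[
|\phi(w_i)-w_i|_1 = |w_{i+1}-w_i+Y_{w_{i+1}}|_1 \;\ge\; |w_{i+1}-w_i|_1 - |Y_{w_{i+1}}|_1.
\]
Construct a nearest-neighbor self-avoiding path $\gamma$ in $\Z^d$ visiting $w_0,w_1,\ldots$ in order, by concatenating shortest-path detours between consecutive $w_i$'s; for $d\ge 2$ a routine routing argument allows the length $L_n'$ of $\gamma$ up to its visit to $w_n$ to satisfy $L_n' = (1+o(1))L_n$ with $L_n := \sum_{i<n}|w_{i+1}-w_i|_1\ge n$. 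Summing the orbit contributions along $\gamma$ and invoking the greedy-path bound $\sum_{v\in\gamma_{L_n'}}|Y_v|_1 \le (1+o(1))L_n'\,M_d(|Y_0|_1)$ a.s., we obtain
\[
\limsup_n \frac{1}{L_n'}\sum_{u\in\gamma_{L_n'}}|\phi(u)-u|_1 \;\ge\; 1-M_d(|Y_0|_1) \;>\; 1/2,
\]
so $C(\phi)>1/2$. Since this holds for \emph{every} $\phi$, $F(\Pi_S)\ge 1-M_d(|Y_0|_1)>1/2$ almost surely.

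The event $\{F\le 1/2\}$ therefore has $\Pi$-probability $1$ and $\Pi_S$-probability $0$, so the laws of $\Pi$ and $\Pi_S$ are mutually singular for every finite $S\subset\Z^d$, and $\Pi$ is $k$-deletion singular for every $k\ge 1$.

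The main obstacle is the self-avoiding path construction: the naive concatenation of $\ell^1$-geodesic segments between consecutive orbit points may revisit previously used vertices, and one must show the detours can be chosen to add only $o(L_n)$ to the total length uniformly over the possibly intricate orbit geometry. For $d\ge 2$ this is handled by standard rerouting arguments that exploit the abundance of $\ell^1$-geodesics on $\Z^d$; a secondary, minor concern is verifying the a.s.\ uniform upper bound $M_{N,d}(|Y|_1)\le (1+o(1))N\,M_d(|Y_0|_1)$ along the adaptively-constructed path, which is immediate from the almost-sure convergence $M_{N,d}/N\to M_d$.
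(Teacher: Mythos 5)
Your strategy is in substance the paper's own: the same matching functional (the paper calls it $f$), the same orbit $w_0=s$, $w_{i+1}=\sigma(w_i)$ of an unmatched lattice point with the same distinctness argument, the same triangle-inequality lower bound $|\phi(w_i)-w_i|_1\ge |w_{i+1}-w_i|_1-|Y_{w_{i+1}}|_1$, and the same greedy-lattice-animal control of $\sum|Y_v|_1$ along the connecting path. The one place you deviate is the insistence that the connecting path through $w_0,w_1,\ldots$ be \emph{self-avoiding}, with length $(1+o(1))L_n$ supplied by a ``routine routing argument''. As written this is a real gap, and you flag it yourself as the main obstacle: the lemma is stated for all $d$ and is invoked in the light-tail half of Theorem~\ref{t:general} in every dimension, but in $d=1$ a self-avoiding nearest-neighbour path cannot visit the $w_i$ in a prescribed non-monotone order at all, and in $d=2$ the uniform $(1+o(1))$ overhead over arbitrary orbit geometries is a statement that would itself need proof (earlier segments can separate the region and force detours comparable to the blocking path's length), not a remark. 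The fix is simply to drop the requirement, which is exactly what the paper does: take $\gamma$ to be the plain concatenation of $\ell^1$-shortest paths between consecutive $w_i$, revisits allowed, so the arrival time at $w_n$ is exactly $L_n$. Nothing in your estimate uses self-avoidance: the lower bound only needs the orbit points $w_i$ themselves to be distinct (which you proved), so each term $|\phi(w_i)-w_i|_1$ is collected, and the upper bound $\sum_{v\in\gamma_{L_n}}|Y_v|_1\le M_{L_n,d}(|Y|_1)$ holds because $M_{n,d}$ is a supremum over \emph{all} nearest-neighbour paths, self-avoiding or not. With that change your chain of inequalities gives $C(\phi)\ge 1-M_d(|Y|_1)>\tfrac12$ for every bijection, as in the paper.

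Two smaller points. First, your measurability device is counterproductive: if the infimum defining $F$ is restricted to a countable canonical family of bijections, the true matching $\phi_*(x)=x+Y_x$ will in general not lie in that family, and the upper bound $F(\Pi)\le M_d<\tfrac12$, which is proved precisely by exhibiting $\phi_*$, is lost (restricting the class only raises $F$). Take the infimum over all bijections, as the paper does. Second, $M_{n,d}$ is defined for paths started at the origin, while your orbit starts at $s\in S$; prepend a fixed finite path from $0$ to $s$ (or use that $M_d$ is an a.s.\ constant unaffected by finitely many sites) before applying the greedy bound.
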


\begin{proof}
We consider only the case of $k=1$, the case of  larger $k$ following essentially without change.
With $\gamma=(u_0,u_1,\ldots)$ and $\gamma_n$ defined as above, for a countable set of points $A\subset \Z^d$ define
\[
f(A) = \inf_{\psi:\Z^d \to A} \sup_{\gamma}  \limsup_n \frac1n \sum_{u\in \gamma_n} |\psi(u)-u|_1,
\]
where the supremum is over all paths $\gamma$ and the infimum is over all bijections from $\Z^d$ into $A$.  Taking $A=\Pi$ and $\psi(u) = u +Y_u$ we have that
\[
f(A) \leq \sup_{\gamma}  \limsup_n \frac1n \sum_{u\in \gamma_n} |Y_u|_1 < \frac12,
\]
since   $M_d(|Y_u|_1)<\frac12$.

Now consider $f(\Pi_0)$.  We define the bijection $W:\Pi \to \mathbb{Z}^d$ so that $y=W(y)+Y_{W(y)}$; this is uniquely
defined almost surely since the $Y_x$ have distributions with no atoms.  Given a bijection $\psi: \Z^d \to \Pi_0$, construct a path $\gamma$ as follows.
Let $v_0=0$ and set $v_{j+1}= W(\psi(v_j))$ for $j \geq 1$ and let $s_j = \sum_{k=1}^j |v_k - v_{k-1}|_1$.  Suppose that $v_j=v_{j'}$ for some $j'>j$.  Then
\[
\psi(v_{j-1})= W^{-1} (v_{j}) = W^{-1} (v_{j'}) = \psi(v_{j'-1})
\]
and so $v_{j-1}=v_{j'-1}$.  Iterating we have that $0=v_0=v_{j'-j}$ which is a contradiction since $v_{j'-j}\in W(\Pi_0)= \mathbb{Z}^d\setminus \{0\}$.

Let $\gamma$ be a nearest neighbor path constructed by sequentially joining the $v_i$ with the shortest intermediate paths, that is $\gamma=(u_0,\ldots)$ satisfies $u_0=v_0=0$ and $u_{s_j} = v_j$.  Then since $Y_{v_{k+1}} = W^{-1}(v_{k+1}) - v_{k+1} = \psi(v_k)- v_{k+1}$ we have that,
\begin{align*}
\limsup_n \frac1n \sum_{u\in \gamma_n} |\psi(u)-u|_1 &\geq \limsup_j \frac1{s_j} \sum_{k=0}^j |\psi(v_k)-v_k|_1,\\
& \geq \limsup_j \frac1{s_j} \sum_{k=0}^j |v_{k+1}-v_j|_1 - |v_{k+1}-\psi(v_k)|_1\\
&\geq \limsup_j \frac1{s_j} \left( s_j - \sum_{k=0}^j |Y_{v_{k+1}}|_1 \right)\\
&\geq 1 - \limsup_n \frac1n \sum_{u\in \gamma_n} |Y_u|_1 > \frac12.
\end{align*}

Since almost surely $f(\Pi) < \frac12$ and $f(\Pi_0)>\frac12$ we have that the two measures are mutually singular.
\end{proof}

\subsection{Proof of Theorem~\ref{t:gaussian}}

\begin{proof} 
If $g(x)=\frac1{\sqrt{2\pi}\sigma} e^{-x^2/(2 \sigma^2)}$ is a one-dimensional Gaussian $N(0,\sigma^2)$ density then
\begin{align}\label{e:GaussianRNration}
\int_{\mathbbm{R}} \left(\frac{g(x+1)}{g(x)} \right)^2 g(x) dx
& = \int_{\mathbbm{R}} \left(\frac{\exp[-(x-1)^2/2\sigma^2]}{\exp[-x^2/2\sigma^2]} \right)^2 g(x) dx\\ \nonumber
& = \int_{\mathbbm{R}} \exp[(2x-1)/\sigma^2] g(x) dx\\ \nonumber
& =  \exp[1/\sigma^2].
\end{align}
As the $d$-dimensional Gaussian measure with density $g_d(x)$ is a product measure, when calculating~\eqref{e:muIcondition} the contributions to the product not in the direction of $e_i$ cancel and the equation reduces to
\begin{equation*}
\int_{\mathbbm{R}^d} \left(\frac{g_d(x+e_i)}{g_d(x)} \right)^2 g_d(x) d x = \int_{\mathbbm{R}} \left(\frac{g(x+1)}{g(x)} \right)^2 g(x) d x  =  \exp[1/\sigma^2].
\end{equation*}
It follows from Proposition~\ref{p:walkTolerant} that for sufficiently large $\sigma$, the process $\Pi$ is deletion and insertion tolerant and mutually absolutely continuous with $\Pi_0$.

We now consider the case when $\sigma$ is small.  By scaling, the greedy lattice animal with weights $|Y_x|_1$ has a finite limiting value with $M(|Y_x|_1)$ proportional to $\sigma$.  It follows by Lemma~\ref{l:greedyIntolerant} that $\Pi$ is deletion singular for sufficiently small $\sigma>0$.

The existence of a critical value $\sigma_c(d)$ follows from the observation that increasing $\sigma$ is equivalent to a semigroup acting on $\Pi$ by shifting the points according to independent Brownian motions.  If $\Pi$ and $\Pi_0$ are not singular for some value of $\sigma$ then they can be coupled with positive probability and hence they can be coupled for all larger values of $\sigma$ as well.  Hence, by Proposition~\ref{p:equivalence}, there must be a critical $\sigma_c(d)$ with deletion tolerance for $\sigma>\sigma_c(d)$ and deletion singularity for $\sigma<\sigma_c(d)$.

We similarly have that for each $k$ there exists a threshold $\sigma_c(k,d)$ with $k$-deletion tolerance above $\sigma_c(k,d)$ and $k$-deletion singularity below.  Letting $\sigma_r(d)=\inf_k \sigma_c(k,d)$ by Proposition~\ref{p:k-equivalence} when $\sigma>\sigma_r$ there is some $k$ for which $\Pi$ is not $k$-deletion singular and hence not rigid by Proposition~\ref{p:rigid-k-tolerant}.  Conversely, if $\sigma < \sigma_r$, then $\Pi$ is $k$-deletion singular for all $k$ and hence rigid by Propositions~\ref{p:k-equivalence} and~\ref{p:rigid-k-tolerant}.  It follows from Lemmas~\ref{p:walkTolerant} and~\ref{l:greedyIntolerant} that $0<\sigma_r(d)<\infty$; this completes the proof.
\end{proof}

\section{General Perturbations}\label{s:generalPerturbation}

In this section we consider more general perturbations and analyze the effect of tails on deletion tolerance.  In particular, we exhibit a transition occurring at a power law decay of exponent $-2d$.

\begin{theorem}\label{t:general}
Let $\Pi$ be the perturbed lattice with perturbations $Y_x$ with density $g(y)$.
\begin{itemize}
\item If $\alpha< 2 d$ and
\begin{equation}\label{e:heavyTailCond}
\inf_{x\in \mathbb{R}^d} \frac{g(x)}{1\wedge |x|^{-\alpha}}  > 0 \, ,
\end{equation}
then the perturbed lattice $\Pi$ is $k$-deletion and $k$-insertion tolerant for all $k$ and mutually absolutely continuous with $\Pi_S$ for any finite set $S\subset\mathbb{Z}^d$.
\item If $\alpha > 2 d$ and
\begin{equation}\label{e:lightTailCond}
\sup_{x\in \mathbb{R}^d} \frac{g(x)}{1\wedge |x|^{-\alpha}}  < \infty \, ,
\end{equation}
then there exists $\epsilon$ such that the perturbed lattice with perturbations $\epsilon' Y_x$ is $k$-deletion singular for all $0<\epsilon'<\epsilon$ and all $k$.  This result also holds under the condition that $\E|Y_x|^{\alpha-d}<\infty$.
\end{itemize}
\end{theorem}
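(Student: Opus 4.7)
The plan is to prove the two parts of Theorem~\ref{t:general} by, respectively, adapting the random-walk absolute continuity argument of Proposition~\ref{p:walkTolerant} to accommodate long-range increments, and combining the greedy-lattice-animal bound of Lemma~\ref{l:greedyIntolerant} with a scaling argument.

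For the heavy-tail tolerance assertion ($\alpha < 2d$), fix a finite $S \subset \mathbb{Z}^d$ and aim to show $\Pi_S \ll \Pi$; the remaining absolute continuities and the $k$-insertion/deletion tolerance then follow from (the $k$-version of) Proposition~\ref{p:equivalence}. Following the strategy of Proposition~\ref{p:walkTolerant}, I would replace the nearest-neighbor walk by an independent random walk $\Gamma^{(s)}$ starting at each $s \in S$, with i.i.d.\ increments drawn from a heavy-tailed distribution $p$ on $\mathbb{Z}^d \setminus \{0\}$. Defining
\[
Y^{\Gamma}_x = \begin{cases} Y_x + \Gamma^{(s)}_{i+1} - \Gamma^{(s)}_i & \text{if } x = \Gamma^{(s)}_i \text{ for some } s, i, \\ Y_x & \text{otherwise,} \end{cases}
\]
the resulting point process has the law of $\Pi_S$, exactly as in Proposition~\ref{p:walkTolerant}. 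Writing $\tnu$ for the law of $Y^\Gamma$ averaged over $\Gamma$, I would repeat the $L^2$-bounded martingale calculation \eqref{e:LM2expression}--\eqref{e:LMproduct}. At a vertex shared between two independent realizations $\Gamma, \Gamma'$ with incoming increments $v, v'$, the per-vertex contribution becomes $\int_{\R^d} g(y+v)g(y+v')/g(y)\, dy$, which under \eqref{e:heavyTailCond} grows only polynomially in $|v|, |v'|$. The tail of $p$ is then chosen so that the expected per-intersection cost is finite under $p \otimes p$ while the walks remain transient enough to satisfy an EIT-type geometric intersection bound. The hypothesis $\alpha < 2d$ is precisely the window in which both requirements can be simultaneously arranged.

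For the light-tail singularity assertion ($\alpha > 2d$), this follows from Lemma~\ref{l:greedyIntolerant} combined with scaling. Either the tail bound \eqref{e:lightTailCond} or the explicit hypothesis $\E|Y_x|^{\alpha - d} < \infty$ yields $\E|Y_x|^{d+\delta} < \infty$ for some $\delta > 0$ (since $\alpha - d > d$), hence the moment condition \eqref{e:greedyCondition}, so $M_d(|Y_x|_1) < \infty$ by Theorem~1 of \cite{GanKes:94}. Since $M_d$ is $1$-homogeneous in its weights, $M_d(|\epsilon' Y_x|_1) = \epsilon' M_d(|Y_x|_1)$; choosing $\epsilon := (2 M_d(|Y_x|_1))^{-1}$ gives $M_d(|\epsilon' Y_x|_1) < 1/2$ for all $0 < \epsilon' < \epsilon$, and Lemma~\ref{l:greedyIntolerant} then yields $k$-deletion singularity for all $k \geq 1$.

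The main obstacle is the first part: constructing a jump distribution whose random walks on $\mathbb{Z}^d$ satisfy simultaneously an EIT-type intersection bound and an averaged finiteness of the $L^2$ ratio integral. Standard simple-random-walk EIT results do not apply, and the appropriate long-range analogue may require either an explicit path-measure construction in the spirit of~\cite{BPP:98} or a direct combinatorial estimate of intersection moments. The polynomial growth of $\int g(y+v) g(y+v')/g(y)\, dy$ in $|v|, |v'|$ under \eqref{e:heavyTailCond} forces a careful balance in the tail exponent of $p$, with $\alpha < 2d$ the precise threshold at which such a balance exists.
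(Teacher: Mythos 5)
Your treatment of the second bullet ($\alpha>2d$) is correct and essentially coincides with the paper's argument: either \eqref{e:lightTailCond} or $\E|Y_x|^{\alpha-d}<\infty$ gives the moment condition \eqref{e:greedyCondition} for $|Y_x|_1$, homogeneity of $M_d$ gives $M_d(|\epsilon' Y_x|_1)<\tfrac12$ for all small $\epsilon'$, and Lemma~\ref{l:greedyIntolerant} yields $k$-deletion singularity.

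The first bullet, however, contains a genuine gap, and the route you chose cannot be carried out under the stated hypothesis. Condition \eqref{e:heavyTailCond} is only a \emph{lower} bound on the density $g$; it gives no upper bound and no moment control. Hence the per-intersection cost $\int g(y+v)g(y+v')/g(y)\,dy$ in your $L^2$ computation need not be finite, let alone polynomially bounded in $|v|,|v'|$: lower-bounding the denominator only reduces the problem to $\int g(y+v)g(y+v')\bigl(1\vee|y|^{\alpha}\bigr)dy$, which can diverge for every $v\neq 0$ if $g$ has thin spikes superimposed on the heavy-tailed profile, so the martingale $L_m$ need not be $L^2$-bounded and the argument collapses at its first step. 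Even granting that cost estimate, you would still need an EIT-type bound for long-range walks in which the per-intersection cost grows with the jump sizes, and you would need it in dimensions $d=1,2$ as well (the theorem is invoked in $d=1$ for $\alpha$-stable perturbations), where no nearest-neighbour EIT measure exists; you flag this as the ``main obstacle'' but leave it unresolved, so the proposal does not constitute a proof. The paper's argument for this half is entirely different and avoids any density ratio: it partitions $\R^d$ into dyadic annuli $H_i$, uses \eqref{e:heavyTailCond} to write the law of each point $x+Y_x$ with $x\in H_i$ as a mixture containing a uniform-on-$H_i\cup H_{i+1}$ component of weight $p_i\asymp 2^{i(d-\alpha)}$, and then couples $\Pi_0$ with a modified process by matching, annulus by annulus, the binomial counts of ``uniformized'' points; the deleted point is absorbed because the number of such points in $H_i$ is $Z_i\approx 2^{i(2d-\alpha)}\to\infty$ (exactly where $\alpha<2d$ enters) and $d_{\mathrm{TV}}\bigl(B(n,p),B(n,p)+1\bigr)=O\bigl((np)^{-1/4}\bigr)$ is summable along the annuli. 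This gives non-singularity of $\Pi$ and $\Pi_0$, and Proposition~\ref{p:equivalence} together with its $k$-version upgrades it to the full statement. To salvage your $L^2$ approach you would need additional regularity or upper-bound assumptions on $g$ that the theorem does not make.
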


\begin{proof}

We first establish the second half of the theorem  when the tails are  light.  The assumption on the density in equation~\eqref{e:lightTailCond}, or the assumption $\E|Y_x|^{\alpha-d}<\infty$, both imply that equation~\eqref{e:greedyCondition} holds for $|Y_x|_1$,  so for small  enough $\epsilon>0$, the limiting constant from~\eqref{e:greedyConvergence} satisfies $M_d(|\epsilon' Y_x|_1)<\frac12$ for $\epsilon'<\epsilon$.  Applying Lemma~\ref{l:greedyIntolerant} then establishes that the perturbed lattice with perturbations $\epsilon' Y_x$ is $k$-deletion singular  completing the proof.  The remainder of the section is devoted to establishing the first half of Theorem~\ref{t:general}.

We will prove the claim in the case $k=1$, with the extension to larger $k$ following similarly.  Let $B_r(0)$ be the Euclidian ball of radius $r$ around the origin.
Define a partition of $\mathbb{R}^d$ into subsets $\{H_i\}_{i\geq 1}$ by $H_1=B_2(0)$, and  $H_i=B_{2^i}(0)\setminus H_{i-1}$ for $i\geq 2$.

Given that equation~\eqref{e:heavyTailCond} holds, the density $g$ is everywhere positive. Since the perturbations are independent, it is sufficient to show that $\Pi_0$ can be coupled with positive probability to $\hpi$, the perturbed point process identical to $\Pi$ except that the perturbation of $0$ is taken according to the uniform distribution on $H_1 \cup H_2$ instead of as $Y_0$.  We denote these perturbations as $\hat{Y}_x$ and will construct a coupling so that $\P(\hpi=\Pi_0)>0$.

By construction there exist constants $0<c_1<c_2$ such that
\begin{equation}\label{e:HiSize}
c_1 2^{id}\leq \left|H_i\cap(\mathbb{Z}^d\setminus\{0\})\right| \leq c_2 2^{id},\qquad c_1 2^{id}\leq \left|H_i\right| \leq c_2 2^{id} \, .
\end{equation}
By equation~\eqref{e:heavyTailCond} then we have that for some $c_3>0$ and for all $i\geq 1$,
\[
\inf_{x\in H_i,y\in H_i\cup H_{i+1}} g(y-x) \geq c_3 2^{-\alpha i} \, .
\]
It follows that with $c_4=c_1c_3$ that for all $i$ and $x\in H_i\cup \mathbb{Z}^d\setminus\{0\}$ we can decompose the measure of $x +Y_x$ into a mixture of the uniform distribution on $H_i\cup H_{i+1}$ with probability $p_i = c_4 2^{i(d-\alpha)}$ and another probability measure $\mu_x$ with probability $1-p_i$.

The first step of our coupling is to construct independent Bernoulli random variables $\{\zeta_x\}_{x\in\mathbb{Z}\setminus\{0\}}$ where $\P(\zeta_x=1)=p_i$ when $x\in H_i$.  When $\zeta_x=0$ we choose $x+\hat{Y}_x$ according to $\mu_x$ and let $x+\hat{Y}_x=x+Y_x$ so it remains to couple the vertices with $\zeta_x=1$ which are distributed uniformly on $H_i\cup H_{i+1}$.  Let $\mathcal{Z}= \{Z_i\}_i$ where $Z_i$ denotes the number of $x\in H_i\cap(\mathbb{Z}\setminus\{0\})$ with $\zeta_x=1$. Counting the fact that $\hat{Y}_0$ is uniform on $H_1\cup H_2$ set $\hat{Z}_1 = 1+Z_1$ and $\hat{Z}_i = Z_i$ for $i\geq 2$.  In summary the remaining not yet coupled points in $\Pi_0$ (respectively $\hpi$) are $Z_i$ (resp. $\hat{Z}_i$) points independent and uniform in $H_i\cup H_{i+1}$ for $i\geq 1$.

Now sampling according to the uniform distribution on $H_i\cup H_{i+1}$ is equivalent to first selecting $H_i$ or $H_{i+1}$ with probability proportional to their area and then sampling the selected region uniformly.  So set $r_i=\frac{|H_i|}{|H_i\cup H_{i+1}|}$ and note that the $r_i$ are uniformly bounded away from $0$ and $1$.  Hence we define as binomials, $W_i = B(Z_i,r_i)$ and set $\mathcal{U}= \{U_i\}_i$ so that $U_1=W_1$ and $U_i=W_i+Z_{i-1}-W_{i-1}$ for $i\geq 2$.  Define $\hat{W}_i$ and $\hat{\mathcal{U}}=\{\hat{W}_i\}_i$ similarly.  With these definitions  the remaining not yet coupled points in $\Pi_0$ (respectively $\hpi$) are $U_i$ (resp. $\hat{U}_i$) points independent and uniform in $H_i$ for each $i\geq 1$.

So our procedure for coupling the remaining points is as follows.  Given~$\mathcal{Z}$, we take the coupling maximizing the probability that $\mathcal{U} \equiv \hat{\mathcal{U}}$.  Conditional on this event the remaining points in $\Pi_0$ and $\hat{\Pi}$ have the same law, namely the union of $U_i$ independent uniformly chosen points in $H_i$ for each $i \geq 1$.  Thus on the event $\mathcal{U} \equiv \hat{\mathcal{U}}$ we can couple $\Pi_0$ and $\hat{\Pi}$ and hence to show deletion tolerance it remains to establish that we can couple $\mathcal{U}$ and $\hat{\mathcal{U}}$ with positive probability.
\begin{claim}
With the definitions above, $\P(\mathcal{U} \equiv \hat{\mathcal{U}})>0$.
\end{claim}
With $c_5=\tfrac12 c_1 c_4>0$ denote by $\mathcal{E}$ the event that,
\begin{equation}\label{e:ZiCond}
Z_i \geq 1\vee c_5 2^{i(2d-\alpha)}, \ \ i \geq 1 \, .
\end{equation}
We will show that $\P(\mathcal{U} \equiv \hat{\mathcal{U}}\mid \mathcal{Z},\mathcal{E})>0$.  First, we will check that $\P(\mathcal{E})>0$. By construction each $Z_i$ is independent with distribution $B(H_i\cap(\mathbb{Z}\setminus\{0\},p_i)$ and so by equation~\eqref{e:HiSize} we have that $\E Z_i \geq c_1 c_4 2^{i(2d-\alpha)}$.  Hence with our choice of $c_5$ by the Azuma-Hoeffding inequality,
\[
\P(Z_i \leq c_5 2^{i(2d-\alpha)}) \leq c_6\exp[-c_7 2^{i(2d-\alpha)}] \, ,
\]
for large $i$. Given this (better than) exponential decay and as $\P(Z_i \geq 1\vee c_5 2^{i(2d-\alpha)})>0$ for all $i$ it follows that $\P(\mathcal{E})>0$.

Now for $\mathcal{U} \equiv \hat{\mathcal{U}}$ we must have $W_1=\hat{W}_1$ and $W_i=1+\hat{W}_i$ for all $i\geq 2$.  The optimal coupling is at least as good as taking the optimal coupling independently for each $i$  so we have that
\begin{align}
\P(\mathcal{U} \equiv \hat{\mathcal{U}}\mid \mathcal{Z}) \geq (1-d_{\mathrm{TV}}(W_1,\hat{W}_1\mid \mathcal{Z}))\prod_{i=2}^\infty (1-d_{\mathrm{TV}}(W_i,\hat{W}_i+1\mid \mathcal{Z}))
\end{align}
where $d_{\mathrm{TV}}(\cdot,\cdot\mid \mathcal{Z})$ denotes the total variation distance given $\mathcal{Z}$.  Since $d_{\mathrm{TV}}(W_1,\hat{W}_1\mid \mathcal{Z})<1$ and $d_{\mathrm{TV}}(W_i,\hat{W}_i+1\mid \mathcal{Z},\mathcal{E})<1$ for all $i\geq 2$ it is sufficient to show that
\begin{align}\label{e:WTVBound}
\sum_{i=2}^\infty d_{\mathrm{TV}}(W_i,\hat{W}_i+1\mid \mathcal{Z},\mathcal{E}) < \infty.
\end{align}
Hence we estimate $d_{\mathrm{TV}}(B(n,p),B(n,p)+1)$.  When $p \leq \frac12$ and  $|j-np|\leq (np)^{3/4}$ then
\begin{align}\label{e:binomialRatio}
&\frac{\left|\P(B(n,p)=j)-\P(B(n,p)=j-1)\right|}{\P(B(n,p)=j)} \nonumber\\
&\qquad= \frac{\left|{n \choose j}p^j(1-p)^{n-j} - {n \choose j-1}p^{j-1}(1-p)^{n-j+1}\right|}{{n \choose j}p^j(1-p)^{n-j}}\nonumber\\
&\qquad= \left|1-\frac{j }{np}\cdot \frac{n(1-p)}{(n-j+1)}\right|\nonumber\\
&\qquad= \left|1-\left(1 + \frac{j -np }{np}\right)\left( 1 - \frac{j - np - 1}{n(1-p)}\right)^{-1}\right|\nonumber\\
&\qquad \leq c_8 (np)^{-1/4}
\end{align}
provided  $np$  an $n(1-p)$ are sufficiently large. It follows that
\begin{align}\label{e:binomialTV}
d_{\mathrm{TV}}(B(n,p),B(n,p)+1)
&= \frac12\sum_{j=0}^{n+1} \left|\P(B(n,p)=j)-\P(B(n,p)=j-1)\right| \nonumber\\
&\leq \P(|B(n,p)-np|\geq (np)^{3/4})\nonumber\\
&\quad + c_8(np)^{-1/4}\sum_{j=np-(np)^{3/4}}^{np+(np)^{3/4}}\P(B(n,p)=j) \nonumber\\
&\leq 2\exp\left(-\frac{(np)^{3/2}}{n}\right)+ c_8 (np)^{-1/4}
\end{align}
where the first inequality is by equation~\eqref{e:binomialRatio} and the second is by Azuma's inequality.  Since $r_i$ is uniformly bounded away from 0 and 1 then
\begin{align}\label{e:WTVBound2}
d_{\mathrm{TV}}(W_i,\hat{W}_i+1\mid \mathcal{Z}) &= d_{\mathrm{TV}}(B(Z_i,r_i),B(Z_i,r_i)+1\mid \mathcal{Z})\nonumber\\
& \leq  2\exp\left(-\frac{(Z_i r_i)^{3/2}}{Z_i}\right)+ c_8 (Z_i r_i)^{-1/4}
\end{align}
Substituting~\eqref{e:WTVBound2} into equation~\eqref{e:WTVBound} we have that
\begin{align*}\label{e:WTVBoundFinal}
&\sum_{i=2}^\infty d_{\mathrm{TV}}(W_i,\hat{W}_i+1\mid \mathcal{Z},\mathcal{E})\nonumber\\
& \ \ \leq \sum_{i=2}^\infty 2\exp\left(-(1\vee c_5 2^{i(2d-\alpha)})^{1/2} r_i^{3/2}\right)+ c_8 ((1\vee c_5 2^{i(2d-\alpha)}) r_i)^{-1/4} < \infty,
\end{align*}
which establishes that $\P(\mathcal{U} \equiv \hat{\mathcal{U}}\mid \mathcal{Z},\mathcal{E})>0$ completing the claim.

Thus the claim ensures we can couple $\mathcal{U}$ and $\hat{\mathcal{U}}$ with positive probability which completes the coupling of $\Pi_0$ and $\hpi$ and proves that  $\Pi$ and $\Pi_0$ are not mutually singular.    Then the deletion and insertion tolerance of $\Pi$  and its mutual absolute continuity with $\Pi_0$ follow by Proposition~\ref{p:equivalence}.

\end{proof}

\section{Proof of Theorem~\ref{t:stable}}

The proof of Theorem~\ref{t:stable} is essentially complete from previous results.  When $\alpha>1$ then the perturbations have finite first moments and the deletion intolerance result of~\cite{HolSoo:10}.  When $\alpha<1$ the perturbations satisfy~\eqref{e:heavyTailCond} and so deletion and insertion  tolerance  follow from Theorem~\ref{t:general}.
The sole remaining case is to show that $\Pi$ is deletion singular when $\alpha=1$ (Cauchy perturbations) which is verified as a special case in the following subsection.

\subsection{Cauchy perturbations}

\begin{lemma}
If $d=1$ and $\{Y_x\}$ are i.i.d.\ Cauchy distributed, then the perturbed lattice is $k$-deletion singular for all $k$.
\end{lemma}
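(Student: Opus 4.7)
By Proposition~\ref{p:rigid-k-tolerant}, it suffices to show that $\Pi$ and $\Pi_S$ are mutually singular for every finite $S \subset \Z$. My plan is to extend the Holroyd--Soo~\cite{HolSoo:10} flow argument, which handles the one-dimensional case with bounded first moment: there the signed flow $F(t, \omega) := |\omega \cap (-\infty, t]| - |\Z \cap (-\infty, t]|$ is almost surely finite, converges as $t \to \infty$, and shifts by $|S|$ under deletion of a finite $S$, yielding mutual singularity.

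For Cauchy perturbations this direct approach fails: since $\P(|Y| > r) \sim 2/(\pi r)$, the variance of $F(t, \Pi)$ grows as $(2/\pi) \log|t|$, so no almost-sure limit exists. I would therefore exploit the reflection symmetry of the Cauchy distribution and work with the symmetrized statistic
\[
G(N, \omega) := |\omega \cap [-N, N]| - (2N + 1),
\]
which satisfies $\E G(N, \Pi) = 0$ and, under deletion of any finite $S$, obeys $G(N, \Pi_S) = G(N, \Pi) - |S|$ for all $N$ exceeding the almost-surely finite random threshold $\max_{s \in S}(|s| + |Y_s|)$. Both sides are integer-valued and the shift is a purely finite integer; the task is to exhibit a measurable function of the configuration that detects it.

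The principal step --- and the main obstacle --- is to extract from the sequence $(G(N, \Pi))_{N \ge 1}$ a measurable $\Z$-valued invariant that is almost surely determined by $\Pi \setminus B$ for any fixed ball $B$, despite the $\sqrt{\log N}$-scale fluctuations of $G$. One approach is to analyze dyadic annular increments $G(2^{n+1}, \Pi) - G(2^n, \Pi)$ and identify, via a Borel--Cantelli argument that uses the $1/|x|^2$ probability that a lattice point at distance $|x|$ has its image fall in the ball $[-N,N]$, a deterministic ``skeleton'' in these increments that is modified by the deletion. An alternative and likely cleaner route is to establish rigidity directly via a Fourier criterion: for the Cauchy density the characteristic function is $\hat{g}(k) = e^{-|k|}$, so $|1 - \hat{g}(k)|^2 \sim k^2$ near the origin, which is the borderline quadratic vanishing sufficient for rigidity of the point count in dimension one. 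Either way, the crux is controlling the log-divergence of the flow while preserving the deterministic integer shift induced by deletion; once this is achieved, Proposition~\ref{p:rigid-k-tolerant} completes the argument.
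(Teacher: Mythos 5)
There is a genuine gap: your plan correctly reduces the problem to showing $\Pi$ and $\Pi_S$ are mutually singular (though the relevant reduction is Proposition~\ref{p:k-equivalence} rather than Proposition~\ref{p:rigid-k-tolerant}), and correctly diagnoses the obstacle --- the raw count $G(N,\omega)=|\omega\cap[-N,N]|-(2N+1)$ has variance of order $\log N$ for Cauchy perturbations, so it does not concentrate --- but you never resolve that obstacle. The two routes you sketch are assertions, not arguments. The ``deterministic skeleton in dyadic annular increments'' is not specified in any way that one could check; the increments $G(2^{n+1},\Pi)-G(2^n,\Pi)$ have non-vanishing variance and there is no identified functional of $\Pi\setminus B$ that recovers the integer shift from them. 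The Fourier route begs the question: $\alpha=1$ is exactly the critical case for this family (for $\alpha<1$ the paper proves deletion \emph{tolerance}), so the claim that the ``borderline quadratic vanishing'' of $|1-\hat g(k)|^2$ is sufficient for rigidity in $d=1$ is precisely what would need to be proved, and no criterion of this kind is established in the paper or in your proposal.

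The paper's proof supplies the two missing mechanisms. First, it replaces the sharp indicator by a tent-weighted linear statistic, $\Psi_m(\Pi)=\frac1m\sum_{x\in\Z}\Phi_{m,x}$ with $\Phi_{m,x}=\max\{m-|x+Y_x|,0\}$; the smoothing at the boundary kills the logarithmic divergence, giving $\var\Psi_m(\Pi)=O(1)$ uniformly in $m$, while deletion of $S$ still shifts $\Psi_m$ by $\frac1m\sum_{x\in S}\Phi_{m,x}\to|S|$ almost surely. Second, since the variance is bounded but does not vanish, one statistic is not enough: the paper shows $\Cov(\Psi_m,\Psi_{m'})\leq C(m/m')^{1/2}\log(m'/m)$, takes the widely separated scales $m_\ell=e^{2\ell^2}$, and averages, so that $\frac1n\sum_{\ell\le n}\bigl(\Psi_{m_\ell}(\Pi)-\E\Psi_{m_\ell}(\Pi)\bigr)\to0$ in probability while the same average for $\Pi_S$ tends to $-|S|$, which gives mutual singularity. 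Without some analogue of this variance reduction plus multi-scale averaging (or a genuinely proved rigidity criterion at the critical exponent), your outline does not yield a proof.
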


\begin{proof}
Our proof follows the approach of~\cite{HolSoo:10}.  Let $S\subset\mathbb{Z}^d$ and $\Phi_{m,x} = \max\{m-|x+Y_x|,0\}$.  We define
\[
\Psi_m(\Pi) = \frac1m\int_{-m}^m (m-|z|) \Pi(dz) = \frac1m\sum_{x\in\mathbb{Z}}\Phi_{m,x}.
\]
We similarly have
\[
\Psi_m(\Pi_S) = \frac1m\int_{-m}^m (m-|z|) \Pi_S(dz) = \frac1m\sum_{x\in\mathbb{Z}\setminus S}\Phi_{m,x}.
\]
and so
\begin{equation}\label{e:PsiDiff}
\Psi_m(\Pi) - \Psi_m(\Pi_0) = \frac1m \sum_{x\in S}\Phi_{m,x}\to |S|
\end{equation}
almost surely.  We next consider the variance of $\Psi_m(\Pi)$ which is bounded as
\begin{equation}\label{e:localVarContribution}
\var(\Phi_{m,x}) \leq \E[\Phi_{m,x} - \max\{m-|x|,0\}]^2 \leq \E[|Y_x| \wedge m]^2 \leq C m.
\end{equation}
If $|x|> 2m$ then since $|\Phi_{m,x}| \leq m$ and since the density of the Cauchy decays like $c y^{-2}$,
\begin{align}\label{e:distantVarContribution}
\var(\Phi_{m,x}) &\leq \E[\Phi_{m,x}]^2 \leq m^2 \P(Y_x \in [-m-x,m-x]) \nonumber\\
&\leq C m^2 \int_{-m-x}^{m-x} y^{-2} dy \leq C' m^3 |x|^{-2}.
\end{align}
Since the $\Phi_{m,x}$ are independent (over $x$) combining equations~\eqref{e:localVarContribution} and~\eqref{e:distantVarContribution} we have that
\begin{equation}\label{e:PhiVarBound}
\var \Psi_m(\Pi) \leq \frac1{m^2}\Bigg[ \sum_{x=-2m}^{2m} C m + \sum_{|x|>2m} C' m^3 |x|^{-2} \Bigg] = O(1).
\end{equation}
Now for $m'>m$ we calculate the covariance of $\Psi_m(\Pi)$ and $\Psi_{m'}(\Pi)$as
\begin{align*}\label{e:distantVarContribution}
\Cov(\Psi_{m},\Psi_{m'}(\Pi))&= \frac1{m m'} \sum_x \Cov(\Phi_{m,x},\Phi_{m',x}) \nonumber\\
&\leq \frac1{m m'} \sum_x \sqrt{\var(\Phi_{m,x})\var(\Phi_{m',x})} \nonumber\\
&\leq \frac1{m m'}\Bigg[ \sum_{x=-2m}^{2m} \sqrt{C^2 m m'} + \sum_{2m<|x|\leq 2m'} \sqrt{C C' m^3 m' |x|^{-2}}\nonumber\\
&\qquad + \sum_{|x|>2m'} C' m^{3/2} m'^{3/2}|x|^{-2}\Bigg]\nonumber\\
&\leq (m/m')^{1/2} \left[C_1 + C_2 \log(m'/m) + +C_3\right]\nonumber\\
&\leq C_4 (m/m')^{1/2}\log(m'/m).
\end{align*}
Then if we take $m_\ell=e^{2\ell^2}$ we have that $\Cov(\Psi_{m_\ell},\Psi_{m_{\ell'}}(\Pi)) \leq O(e^{-(\ell\vee \ell')})$ when $\ell\neq \ell'$ and hence
\[
\var \left[\frac1n \sum_{\ell=1}^n \Psi_{m_\ell}(\Pi)\right] = o(1).
\]
So we have that $\frac1n \sum_{i=1}^n \Psi_{m_i}(\Pi)-\E\Psi_{m_i}(\Pi)$ converges to 0 in probability while by~\eqref{e:PsiDiff} we have that $\frac1n \sum_{i=1}^n \Psi_{m_i}(\Pi_0)-\E\Psi_{m_i}(\Pi)$ converges to $-|S|$ in probability.  It follows that $\Pi$ and $\Pi_S$ are mutually singular and so by Proposition~\ref{p:k-equivalence} $\Pi$ is $k$-deletion singular for all $k$.
\end{proof}

\section{Absolute Continuity}

In this section we prove the equivalences of deletion intolerance and insertion intolerance and deletion and insertion singularity.

\begin{proof}[Proof of Proposition~\ref{p:equivalence}]

In this section we establish Proposition~\ref{p:equivalence}.   Let $\Q$ denote the law of $\Pi$ and for a finite set $S\subset \Z^d$ we denote $\Pi_S=\{x+Y_x:x\in \Z^d \setminus S\}$ and its law as $\Q_S$.

\emph{\eqref{it:mutAbs} $\Longleftrightarrow$ \eqref{it:delSingOld}}. If $\Pi$ and $\Pi_0$ are mutually singular then clearly $\Pi$ and $\Pi_0$ are not mutually absolutely continuous.  Now assume that $\Pi$ and $\Pi_0$ are not mutually singular but that $\Q_0$ is not absolutely continuous with respect to $\Q$.  Then we can find a measurable set $A$ such that $\Q(A)=1$ and $0<\Q_0(A)<1$ and that on $A$, $\Q_0$ is absolutely continuous with respect to $\Q$ with a Radon-Nikodym  derivative given by $\kappa(a)=\frac{d \Q_0}{d \Q}$.  We will show that $\Pi_0 \in A$ is a tail event for the $\{Y_x\}$.

For some $S\subset  \Z^d\setminus\{0\}$  define the set
\[
B=B_S := \{b:\Q_0(A\mid \Pi_{S\cup\{0\}} = b) \in (0,1) \}.
\]
Suppose that $\P[\Pi_{S\cup\{0\}} \in B]>0$.  Defining the sub-probability measure
\[
\tilde{\Q}_0(E):=\P[\Pi_0\in E\cap A, \Pi_{S\cup\{0\}} \in B],
\]
we have that
\[
\tilde{\Q}_0(A):=\P[\Pi_0\in A, \Pi_{S\cup\{0\}} \in B]=\int_B \Q_0(A\mid \Pi_{S\cup\{0\}}=b) d\Q_{S\cup\{0\}}( b)>0.
\]
Since $\tilde{\Q}_0$ is dominated by $\Q_0$   it is absolutely continuous with respect to $\Q$ and so not mutually singular.
Hence there exists a coupling of $(Y_x,\Pi)$ and an identically  distributed copy $(Y_x^\star,\Pi^\star)$ such that
\[
\P[ \Pi=\Pi_0^\star, \Pi_{S\cup\{0\}}^\star \in B] > 0.
\]
Since the points $\{x+Y_x^\star:x\in S\}$ must have images in $\Pi$ when the point processes are equal and as there are only countably many choices of $\hat{S}\subset \Z^d$ with $|\hat{S}|=|S|$ we have that for some $\hat{S}$,
\[
\P[ \Pi_{\hat{S}}=\Pi_{S\cup\{0\}}^\star, \Pi_{S\cup\{0\}}^\star \in B] > 0.
\]
As the $Y_x$ have a positive density everywhere the sets $\{x+Y_x^\star:x\in S\}$ and $\{x+Y_x:x\in \hat{S}\}$ are mutually absolutely continuous and hence the distributions $\Q_0(\cdot \mid \Pi_{S\cup\{0\}} = b)$ and $\Q(\cdot \mid \Pi_{\hat{S}} = b)$ are also mutually absolutely continuous.  Then by definition of $B$ we have that for all $b\in B$, $\Q(A \mid \Pi_{\hat{S}} = b) < 1$ and hence
\begin{align*}
\Q(A^c)\geq \P[\Pi_{\hat{S}} \in B, \Pi \not \in A] & = \int_B \Q(A^c \mid \Pi_{S\cup\{0\}}=b) d\Q_{\hat{S}}(b)>0.
\end{align*}
But $\Q(A^c)=0$ so we have a contradiction and hence $\P[\P(\Pi_0 \in A\mid \Pi_{S\cup\{0\}}) \in (0,1)]=0$ for all $S$.  This implies that $\Pi_0 \in A$ is a tail event and so by the Kolmogorov zero-one law we have that $\P[\Pi_0 \in A]=1$ since $\Q_0(A)>0$.  This contradicts our assumption that $\Q_0(A)<1$ so we have that $\Q_0$ is absolutely continuous with respect to $\Q$.  That $\Q$ is absolutely continuous with respect to $\Q_0$ follows similarly so the laws are mutually absolutely continuous.

\emph{\eqref{it:delTol} $\Longleftrightarrow$ \eqref{it:delSing} $\Longleftrightarrow$ \eqref{it:delSingOld}}.  Suppose $\Q$ and $\Q_0$ are   singular.  If $Z$ is a $\Pi$ point then by an abuse of notation let $\Pi_Z$ denote $\Pi \setminus Z$.  Let $X\in\Z^d$ be the random lattice point such that $X+Y_X=Z$ so $\Pi_Z=\Pi_X$.  Since, by translation, each $\Pi_x$ is singular to $\Pi$ so is $\Pi_X$ because $X\in\Z^d$ which is countable.  Hence $\Pi_Z$ is  mutually singular to $\Pi$ and so $\Pi$ is deletion singular and hence also deletion intolerant.

Conversely, suppose that $\Q$ and $\Q_0$ are not mutually singular, so they must be mutually absolutely continuous.  Then for any set $A$, if $\P[\Pi\in A]=0$ then $\P[\Pi_x \in A]=0$, whence $\P[\Pi_Z\in A] \leq \sum_{x\in\Z^d} \P[\Pi_x \in A] =0$. Thus $\Pi_Z$ is absolutely continuous with respect to $\Pi$, so $\Pi$ is deletion tolerant and not deletion singular.

\emph{\eqref{it:insTol} $\Longleftrightarrow$  \eqref{it:insSing} $\Longleftrightarrow$ \eqref{it:delSingOld}}.
Suppose $\Q$ and $\Q_0$ are mutually singular.  Let  $V \subset \mathbb{R}^d$ be a Borel set with Lebesgue measure $\mathcal{L}(V)\in(0,\infty)$ and $U$ a random variable independent of $\{Y_x\}_{x\in\Z^d}$ uniform on $V$.  Suppose that $\Pi \cup U$ is not mutually singular with respect to $\Pi$.  Then there exists an identically  distributed copy $(U^\star, Y_x^\star,\Pi^\star)$ and a coupling such that
\[
\P[ \Pi^\star \cup U^\star = \Pi] > 0.
\]
On the event that they agree let $X$ denote the random lattice point such that $X+Y_X=U^\star$.  For some $x$ we have $\P[X=x, \Pi^\star \cup U^\star = \Pi] > 0$ and hence $\P[\Pi^\star  = \Pi_x] > 0$.  But $\Q$ and $\Q_x$ are mutually singular which is a contradiction so $\Pi \cup V$ is  mutually singular with respect to $\Pi$ and hence $\Pi$ is  insertion singular and hence insertion intolerant.

Conversely if $\Q$ and $\Q_0$ are mutually absolutely continuous then $(U,\Pi)$ is  absolutely continuous with respect to $(Y_0,\Pi_0)$ since $Y_0$ has a positive density everywhere.  It follows that $\Pi\cup U$ is absolutely continuous with respect to $Y_0 \cup \Pi_0 = \Pi$ so $\Pi$ is insertion tolerant and hence not insertion singular.

\end{proof}

\section{Rigidity}

We begin with the following Proposition relating the $k$-deletion tolerance versions and which follows with minor modification to the proof of Proposition~\ref{p:equivalence}.
\begin{proposition}\label{p:k-equivalence}
If the distribution of $Y_x$ has a density which is everywhere positive and $S\subset \mathbb{Z}^d$ is of size $k$ then the following are equivalent
\begin{enumerate}
\item The perturbed lattice is $k$-deletion tolerant.
\item The perturbed lattice is $k$-insertion tolerant.
\item The perturbed lattice is not $k$-deletion singular.
\item The perturbed lattice is not $k$-insertion singular.
\item The measures $\Pi$ and $\Pi_S$ are mutually absolutely continuous.
\item The measures $\Pi$ and $\Pi_S$ are not mutually singular.
\end{enumerate}
\end{proposition}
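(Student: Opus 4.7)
The plan is to mimic the argument for Proposition~\ref{p:equivalence} almost verbatim, carrying the set $S$ of size $k$ through in place of the singleton $\{0\}$. Fix $S \subset \mathbb{Z}^d$ with $|S|=k$ throughout, and write $\Q$, $\Q_S$ for the laws of $\Pi$ and $\Pi_S$.

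\emph{Equivalence of (5) and (6).} The implication $(5)\Rightarrow(6)$ is immediate. For $(6)\Rightarrow(5)$, I would repeat the tail-event argument from the proof of \eqref{it:mutAbs}$\Leftrightarrow$\eqref{it:delSingOld}. Assume $\Q$ and $\Q_S$ are not mutually singular but $\Q_S$ is not absolutely continuous with respect to $\Q$, and pick a measurable set $A$ with $\Q(A)=1$ and $0<\Q_S(A)<1$. For a finite $T\subset\mathbb{Z}^d\setminus S$ set
\[
B_T := \{b : \Q_S(A \mid \Pi_{S\cup T} = b)\in (0,1)\}.
\]
Suppose $\P[\Pi_{S\cup T}\in B_T]>0$; then the sub-probability measure
\[
\tilde{\Q}_S(E) := \P[\Pi_S\in E\cap A,\, \Pi_{S\cup T}\in B_T]
\]
is dominated by $\Q_S$, hence by hypothesis not singular to $\Q$. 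This yields a coupling of two i.i.d.\ copies with $\Pi = \Pi_S^{\star}$ on a positive probability event. The $|S|$ extra points $\{x+Y_x^{\star}:x\in S\}$ must then land at lattice translates of some perturbations on the $\Pi$ side; by countability of subsets $\hat{S}\subset\mathbb{Z}^d$ of size $k$ there exists $\hat{S}$ of size $k$ with $\P[\Pi_{\hat{S}}=\Pi_{S\cup T}^{\star},\,\Pi_{S\cup T}^{\star}\in B_T]>0$. Everywhere-positivity of $g$ gives mutual absolute continuity of $\Q_S(\cdot\mid \Pi_{S\cup T}=b)$ and $\Q(\cdot \mid \Pi_{\hat{S}}=b)$, hence $\Q(A^c \mid \Pi_{\hat{S}}=b)>0$ for $b\in B_T$, contradicting $\Q(A^c)=0$. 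Therefore $\P[\Q_S(A\mid \Pi_{S\cup T})\in(0,1)]=0$ for every finite $T$, which shows that $\{\Pi_S\in A\}$ is a tail event for $\{Y_x\}$, and Kolmogorov's $0$-$1$ law forces $\Q_S(A)\in\{0,1\}$, a contradiction. Reversing the roles of $\Q$ and $\Q_S$ gives mutual absolute continuity.

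\emph{Equivalences (1)$\Leftrightarrow$(3)$\Leftrightarrow$(6) and (2)$\Leftrightarrow$(4)$\Leftrightarrow$(6).} These follow from (5)$\Leftrightarrow$(6) plus countability. For deletion, given distinct $\Pi$-points $Z_1,\ldots,Z_k$, almost surely there exist unique lattice points $X_1,\ldots,X_k$ with $X_i+Y_{X_i}=Z_i$, and $\Pi\setminus\{Z_1,\ldots,Z_k\} = \Pi_{\{X_1,\ldots,X_k\}}$. If (6) holds, then by translation invariance $\Q_{S'}$ is mutually absolutely continuous with $\Q$ for every $S'$ of size $k$, so any null event for $\Pi$ is null for $\Pi_{\{X_1,\ldots,X_k\}}$ by summing over the countably many choices of $\{X_1,\ldots,X_k\}$; this gives (1), hence (3). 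Conversely, if (6) fails (so $\Q$ and $\Q_S$ are mutually singular for $S$ of size $k$), then $\Q$ and $\Q_{S'}$ are mutually singular for every such $S'$ by translation, and the $\sigma$-algebra countability argument shows $\Pi\setminus\{Z_1,\ldots,Z_k\}$ is singular to $\Pi$ for every choice of distinct $\Pi$-points, giving $k$-deletion singularity. The insertion case is identical: if $U_1,\ldots,U_k$ are i.i.d.\ uniform on $V$, everywhere-positivity of $g$ makes the joint law of $(U_1,\ldots,U_k,\Pi)$ mutually absolutely continuous with $(Y_{x_1},\ldots,Y_{x_k},\Pi_{\{x_1,\ldots,x_k\}})$ for any $k$ distinct lattice points $x_1,\ldots,x_k$, so under (6), $\Pi\cup\{U_1,\ldots,U_k\}$ is absolutely continuous to $\Pi$, and under failure of (6) the inserted copy is recognizable by locating the $k$ inserted points at lattice translates, producing singularity.

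\emph{Main obstacle.} The only real care is in the tail-event step of (6)$\Rightarrow$(5): one must condition on the perturbations outside $S\cup T$ for arbitrary finite $T$, and use everywhere-positivity of $g$ to lift absolute continuity of the conditional distributions of the $S$-perturbations (given $\Pi_{S\cup T}=b$) to absolute continuity of the $\hat{S}$-perturbations (given $\Pi_{\hat{S}}=b$). Once that conditional step is in hand, the rest is a bookkeeping exercise with countable unions over the finitely-supported shifts $\hat{S}$ of $S$ and an appeal to Kolmogorov's $0$-$1$ law.
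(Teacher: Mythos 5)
Your proposal is correct and follows the paper's own route: the paper proves Proposition~\ref{p:k-equivalence} simply by noting that the proof of Proposition~\ref{p:equivalence} goes through with $k$ points added or removed in place of one, which is precisely the adaptation you carry out (tail-event argument with $S$ in place of $\{0\}$, then countable unions over the possible vacated/inserted sites). One small wording fix: when you pass from the fixed $S$ to an arbitrary $S'$ of size $k$ (e.g.\ $S'=\{X_1,\ldots,X_k\}$ in the deletion step), translation invariance alone is not enough since $S'$ need not be a translate of $S$; instead use the everywhere-positive density to conclude that $\Pi_{S'}$ and $\Pi_{S}$ are mutually absolutely continuous (exchange the finitely many points centered at $S\setminus S'$ and $S'\setminus S$), the same swap you already invoke in the tail-event and insertion steps.
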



The proof of Proposition~\ref{p:k-equivalence} follows by the same proof as Proposition~\ref{p:equivalence} with the minor alteration of adding or removing $k$-points instead of 1.  Finally we prove Proposition~\ref{p:rigid-k-tolerant} relating rigidity and deletion tolerance.

\begin{proof}(Proof of Proposition~\ref{p:rigid-k-tolerant})
Suppose first that there exists some $S$ such that $\Pi_S$ is not singular with respect to $\Pi$ but that $\Pi$ is rigid.  Then $N(\Pi_{\hbox{out}})= | \Pi_{\hbox{in}}| \ a.s.$ but also $N((\Pi_S)_{\hbox{out}})= | (\Pi_S)_{\hbox{in}}| \ a.s.$ since $\Pi$ is mutually absolutely continuous with respect to $\Pi_S$ by Proposition~\ref{p:k-equivalence}.  However, on the event $A=\{\forall x\in S: \ x+Y_x\in B_1(0)\}$ by definition $\Pi_{\hbox{out}}=(\Pi_S)_{\hbox{out}}$ but $\Pi_{\hbox{in}}=(\Pi_S)_{\hbox{in}}+|S|$.  Since $\P[A]>0$ this is a contradiction and so $\Pi$ is not rigid.

Now suppose that $\Pi$ is not rigid and fix some ball $B$ for which it fails.  Let $\psi(\Pi_{\hbox{out}},j)=\P[|\Pi_{\hbox{in}}|=j\mid \Pi_{\hbox{out}}]$. Since $\Pi$ is not rigid it follows that
\[
\P\left[ \psi(\Pi_{\hbox{out}},\Pi_{\hbox{in}}) < 1 \right] >0
\]
and  since $\E[|\Pi_{\hbox{in}}|\mid \Pi_{\hbox{out}}] = \sum_j j \psi(\Pi_{\hbox{out}},j)$ we have that
\[
\P\left[ \sum_{j<\Pi_{\hbox{in}}} \psi(\Pi_{\hbox{out}},j) >0 \right] >0.
\]
In particular for some positive integer $k$ we have that
\[
\P\left[ \psi(\Pi_{\hbox{out}},\Pi_{\hbox{in}}-k) >0 \right] >0.
\]
Thus we can construct an independent copy $\Pi'$ of $\Pi$ such that
\[
\P[\Pi_{\hbox{out}}'=\Pi_{\hbox{out}},|\Pi'_{\hbox{in}}|+k = |\Pi_{\hbox{in}}|]>0.
\]
Now since there are a countable number of finite subsets of $\mathbb{Z}^d$ we can find sets $S,S'\subset \mathbb{Z}^d$ with $|S|=|S'|+k$
\[
\P[\Pi_{\hbox{out}}'=\Pi_{\hbox{out}},\{x+Y'_x:x\in S'\}=\Pi'_{\hbox{in}},\{x+Y_x:x\in S\}=\Pi_{\hbox{in}}]>0
\]
and so by removing these points
\begin{equation}\label{e:set-removed-coupling}
\P[\Pi'_{S'}=\Pi_{S}]>0.
\end{equation}
Let $S^*\subset S$ with $|S^*|=k$ and let $U_1,\ldots,U_{|S'|}$ be i.i.d.\ standard $d$-dimensional Gaussians.  Then since each $U_i$ is mutually absolutely continuous with respect to $x+Y_x$ for any $x$ then $\Pi_{S}\cup\{U_1,\ldots,U_{|S'|}\}$ is mutually absolutely continuous with respect to $\Pi_{S^*}$ and $\Pi_{S}'\cup\{U_1,\ldots,U_{|S'|}\}$ is mutually absolutely continuous with respect to $\Pi'$ and hence $\Pi$.  Combining this with~\eqref{e:set-removed-coupling} implies that $\Pi$ and $\Pi_{S^*}$ are not mutually singular which completes the proof.
\end{proof}

\section{Deletion singularity without rigidity}\label{s:twoButNotOne}

In this section we prove Theorem~\ref{t:twoButNotOne}.
\begin{proof}
First we show that $\hat{\Pi}$ is $2$-deletion tolerant  if $\sigma^2-\delta^2>\sigma^2_c$.  By Theorem~\ref{t:gaussian} we have that $\Pi=\{x+Y_x:x\in \mathbb{Z}^d\}$ is deletion tolerant.  We can construct $\hat{\Pi}$ from $\Pi$ by replacing each point in $z\in\Pi$ with points $z+G_{z,1}$ and $z+G_{z,2}$ for independent $N_d(0,\delta^2)$ Gaussians $G_{z,1}$ and $G_{z,2}$.  Since $\Pi$ and $\Pi_0$ are mutually absolutely continuous, by Proposition~\ref{p:equivalence} we have that $\hat{\Pi}$ and $\hat{\Pi}_0=\{x+\hat{Y}_{x,i}:(x,i)\in ({Z}^d\setminus\{0\})\times \{1,2\}\}$ are mutually absolutely continuous.  Arguing similarly to the proof of Proposition~\ref{p:equivalence} it follows  $\hat{\Pi}$ is $2$-deletion tolerant.

To prove that $\hat{\Pi}$ is not deletion tolerant we again argue by contradiction from a coupling as in the proof of Lemma~\ref{l:greedyIntolerant}.

Let $\mathcal{V}_n$ be the set of all pairs of sequences $V_n=\Big((v_0,\ldots,v_n),(v^\star_0,\ldots,v^\star_n\Big)$ taking elements in  $\mathbb{Z}^d\times\{1,2\}$ such that the elements $v_0(1),\ldots,v_n(1)$ are distinct as are $v^\star_0(1),\ldots,v^\star_n(1)$ where $v_0^\star=(0,2)$ and with $v_0^\star=(0,2)$.  Let
\[
L_n=L_n(V_n) = \sum_{i=0}^n  |v^\star_i(1) - v_{i}(1)|_1 + \sum_{i=0}^{n-1} |v_i(1) - v^\star_{i+1}(1)|_1.
\]
Note that since the $v_i(0)$ are distinct we have that $L_n \geq n$.  We define the following collection of events for constants $C_1(d)>0$ to be fixed later
\begin{itemize}
\item Let $\mathcal{I}_n(V_n)$ (respectively $\mathcal{I}^\star_n(V_n)$) be the event that
\[
\sum_{i=0}^n \sum_{j=1}^2  |\hat{Y}_{v_i(1),j}|_1 \geq \frac12 L_n, \quad \hbox{resp. } \ \sum_{i=0}^n \sum_{j=1}^2  |\hat{Y}^\star_{v^\star_i(1),j}|_1 \geq \frac12 L_n.
\]
\item Let $\mathcal{J}_n(V_n)$ 
be the event that
\begin{align*}
&\sum_{i=0}^{n-1} I(|\hat{Y}_{v_i(1),1}-\hat{Y}_{v_i(1),2}|_1 \geq C_1) \geq \frac12 n,
\end{align*}
where $I(\cdot)$ denotes the indicator.
\item Let $\mathcal{J}^\star_n(V_n)$ 
be the event that
\[
\sum_{i=0}^{n-1} I(|(v^\star_{i}(1) + \hat{Y}^\star_{v^\star_{i}(1),3-v^\star_{i}(2)})-(v^\star_{i+1}(1) + \hat{Y}^\star_{v^\star_{i+1}(1),v^\star_{i+1}(2)})|_1 \leq C_1) \geq \frac12 n.
\]
\end{itemize}
By basic large deviations estimates since $\hat{Y}_{v_i(1),j}$ are $N_d(0,\sigma^2)$ then for sufficiently large $C_2(d)$ when $L_n \geq C_2 n$,
\begin{equation}\label{e:InBound}
\P[\mathcal{I}_n(V_n)] \leq (4d)^{-L_n}.
\end{equation}
and similarly for $\mathcal{I}^\star_n(V_n)$.  Let $\mathcal{F}_i$ be the $\sigma$-algebra generated by $\{\hat{Y}^\star_{v^\star_{i}(1),1},\hat{Y}^\star_{v^\star_{i}(1),1}\}_{1\leq i' \leq i}$.  By choosing $C_1 = C_1(d,\sigma)$ to be sufficiently small we can make
\[
\P[|(v^\star_{i}(1) + \hat{Y}^\star_{v^\star_{i}(1),3-v^\star_{i}(2)})-(v^\star_{i+1}(1) + \hat{Y}^\star_{v^\star_{i+1}(1),v^\star_{i+1}(2)})|_1 \leq C_1\mid \mathcal{F}_i ]< \frac14(4d)^{-2 C_2},
\]
for all $V_n$ and $i$ since $\hat{Y}^\star_{v^\star_{i+1}(1),v^\star_{i+1}(2)}$ is distributed as $N_d(0,\sigma^2)$ and is independent of $\mathcal{F}_i$.  Hence
\begin{equation}\label{e:JnStarBound}
\P[\mathcal{J}^\star_n(V_n)] \leq {n \choose n/2}\left(\frac14(4d)^{-2 C_2}\right)^{\frac12 n} \leq (4d)^{-C_2 n},
\end{equation}
for large enough $n$.  Finally, we may choose $\delta>0$ to be sufficiently small so that
\[
\P[|\hat{Y}_{v_i(1),1}-\hat{Y}_{v_i(1),2}|_1 \geq C_1] \leq \frac14(4d)^{-C_2},
\]
since $\hat{Y}_{v_i(1),1}-\hat{Y}_{v_i(1),2}$ is distributed as $N_d(2\delta^2)$ and hence
\begin{equation}\label{e:JnBound}
\P[\mathcal{J}_n(V_n)] \leq (4d)^{-C_2 n}.
\end{equation}
Finally we note that $\{V_n\in\mathcal{V}_n:L_n(V_n) = \ell\}\leq (2d)^{L_n}$.

Now suppose that $\hat{\Pi}$ and $\hat{\Pi}_{(0,1)}=\big\{x+\hat{Y}_{x,i}:(x,i)\in (\mathbb{Z}^d\times \{1,2\})\setminus\{(0,1)\}\big\}$ are not mutually singular.  Then there exists an identically distributed copy $(\{\hat{Y}_{x,i}^\star\},\Pi^\star)$ and a coupling so that the event $\mathcal{A}=\{\Pi = \hat{\Pi}_{(0,1)}^\star\}$ has positive probability.
We define the bijections $W:\hat{\Pi} \to \mathbb{Z}^d\times\{1,2\}$ and $W^\star:\hat{\Pi}_{(0,1)}^\star\to \mathbb{Z}^d\times\{1,2\}\setminus \{(0,1)\}$ so that
\[
y=W_1(y)+\hat{Y}_{W(y)},\qquad y=W^\star_1(y)+\hat{Y}^\star_{W^\star(y)}.
\]
On $\mathcal{A}$  define the sequence $u_0^\star=(0,1)$ and
\[
u_i=W(u^\star_{i}(1)+\hat{Y}^\star_{u^\star_{i}(1),3-u^\star_{i}(2)}), \qquad u^\star_{i+1}=W^\star(u_{i}(1)+\hat{Y}_{u_{i}(1),3-u_{i}(2)}),
\]
where $u_i(1)$ denotes the first coordinate of $u_1$.  By construction the $\{u_i(1)\}_{i\geq 0}$ are distinct as are the $\{u^\star_i(1)\}_{i\geq 0}$ and $U_n=\Big((u_0,\ldots,u_n),(u^\star_0,\ldots,u^\star_n\Big)\in\mathcal{V}_n$.  Also by construction
\begin{align*}
u^\star_i(1) + \hat{Y}^\star_{u^\star_i(1),3-u^\star_i(2)} &= u_i(1) + \hat{Y}_{u_i(1),u_i(2)},\\
u_i(1) + \hat{Y}_{u_i(1),3-u_i(2)} &= u^\star_{i+1}(1) + \hat{Y}^\star_{u^\star_{i+1}(1),u^\star_{i+1}(2)},
\end{align*}
and hence by the triangle inequality we have that
\begin{align*}
L_n(V_n) &= \sum_{i=0}^n  |u^\star_i(1) - u_{i}(1)|_1 + \sum_{i=0}^{n-1} |u_i(1) - u^\star_{i+1}(1)|_1\\
&\leq \sum_{i=0}^n \sum_{j=1}^2  |\hat{Y}_{u_i(1),j}|_1 + \sum_{i=0}^n \sum_{j=1}^2  |\hat{Y}^\star_{u^\star_i(1),j}|_1
\end{align*}
and so the event $\mathcal{I}_n(U_n) \cup \mathcal{I}^\star_n(U_n)$ holds on $\mathcal{A}$.

Again by the definition of $U_n$,
\[
\hat{Y}_{v_i(1),1}-\hat{Y}_{v_i(1),2} = (v^\star_{i}(1) + \hat{Y}^\star_{v^\star_{i}(1),3-v^\star_{i}(2)})-(v^\star_{i+1}(1) + \hat{Y}^\star_{v^\star_{i+1}(1),v^\star_{i+1}(2)})
\]
and hence at least on of $\mathcal{J}_n(U_n)$ and $\mathcal{J}^\star_n(U_n)$ holds on $\mathcal{A}$.  Hence
\begin{align*}
\P[\mathcal{A}]&\leq \sum_{V_n\in\mathcal{V}_n} \P\left[\big(\mathcal{I}_n(V_n) \cup \mathcal{I}^\star_n(V_n)\big)\cap \big(\mathcal{J}_n(V_n) \cup \mathcal{J}^\star_n(V_n)\big)\right]\\
&\leq \sum_{m= n}^{C_2 n} \sum_{\substack{V_n\in\mathcal{V}_n\\L_N(V_n)=m}} \P\left[\mathcal{J}_n(V_n) \cup \mathcal{J}^\star_n(V_n)\right]\\
&\qquad +  \sum_{m=C_2 n}^{\infty} \sum_{\substack{V_n\in\mathcal{V}_n\\L_N(V_n)=m}} \P\left[\mathcal{I}_n(V_n) \cup \mathcal{I}^\star_n(V_n)\right]
\end{align*}
By equation~\eqref{e:InBound}
\[
\sum_{m=C_2 n}^{\infty} \sum_{\substack{V_n\in\mathcal{V}_n\\L_N(V_n)=m}} \P\left[\mathcal{I}_n(V_n) \cup \mathcal{I}^\star_n(V_n)\right]  \leq \sum_{m=C_2 n}^{\infty} (2d)^{m} (4d)^{-m},
\]
and by equation~\eqref{e:JnBound} and~\eqref{e:JnStarBound}
\[
\sum_{m= n}^{C_2 n} \sum_{\substack{V_n\in\mathcal{V}_n\\L_N(V_n)=m}} \P\left[\mathcal{J}_n(V_n) \cup \mathcal{J}^\star_n(V_n)\right] \leq \sum_{m= n}^{C_2 n} (2d)^{m} (4d)^{-C_2 n}
\]
and since both of these bounds tends to 0 as $n$ tends to infinity we have that $\P[\mathcal{A}]=0$.  This is a contradiction and hence $\hat{\Pi}$ is deletion singular.
\end{proof}

\noindent {\bf Acknowledgements}.  The authors would like to thank Alexander Holroyd for useful discussions.

\begin{bibdiv}
\begin{biblist}

\bib{ACHZ:08}{article}{
  title={Searching for a trail of evidence in a maze},
  author={Arias-Castro, E.},
  author={Candes, E.J.},
  author={Helgason, H.},
  author={Zeitouni, O.},
  journal={The Annals of Statistics},
  volume={36},
  pages={1726--1757},
  year={2008},
}

\bib{BPP:98}{article}{
  title={Unpredictable paths and percolation},
  author={Benjamini, I.},
  author={Pemantle, R.},
  author={Peres, Y.},
  journal={Annals of probability},
  pages={1198--1211},
  year={1998},
  publisher={JSTOR}
}

\bib{BerPer:13}{article}{
  title={Detecting the trail of a random walker in a random scenery},
  author={Berger, N.},
  author={Peres, Y.},
  title={Detecting the trail of a random walker in a random scenery},
   journal={Electron. J. Probab.},
   volume={18},
   date={2013},
   pages={no. 87, 18},
   issn={1083-6489},
   review={\MR{3119085}},
   doi={10.1214/EJP.v18-2367},
}

\bib{CGGK:93}{article}{
  title={Greedy lattice animals I: Upper bounds},
  author={Cox, J.T.},
  author={Gandolfi, A.},
  author={Griffin, P.S.},
  author={Kesten, H.},
  journal={The Annals of Applied Probability},
  volume={3},
  pages={1151--1169},
}

\bib{DeGaKe:01}{article}{
  title={Greedy lattice animals: negative values and unconstrained maxima},
  author={Dembo, A.},
  author={Gandolfi, A.},
  author={Kesten, H.},
  journal={The Annals of Probability},
  volume={29},
  pages={205--241},
  date={2001},
}

\bib{GanKes:94}{article}{
  title={Greedy lattice animals II: Linear growth},
  author={Gandolfi, A.},
  author={Kesten, H.},
  journal={The Annals of Applied Probability},
  volume={4},
  number={1},
  pages={76--107},
  year={1994},
  publisher={JSTOR}
}

\bib{GhoPer:12}{article}{
  title={Rigidity and Tolerance in point processes: Gaussian zeroes and Ginibre eigenvalues},
  author={Ghosh, S.},
  author={Peres, Y.},
  journal={arXiv:1211.2381},
  date={2012}
}

\bib{HagMos:98}{article}{
    AUTHOR = {H{\"a}ggstr{\"o}m, Olle and Mossel, Elchanan},
     TITLE = {Nearest-neighbor walks with low predictability profile and
              percolation in {$2+\epsilon$} dimensions},
  JOURNAL = {The Annals of Probability},
    VOLUME = {26},
      YEAR = {1998},
    NUMBER = {3},
     PAGES = {1212--1231},
}

\bib{Hammond:06}{article}{
  title={Greedy lattice animals: Geometry and criticality},
  author={Hammond, A.},
  journal={The Annals of Probability},
  volume={34},
  pages={593--637},
  date={2006}
}

\bib{HolSoo:10}{article}{
  title={Insertion and Deletion Tolerance of Point Processes},
  author={Holroyd, A.E.},
  author={Soo, T.},
  journal={arXiv:1007.3538},
  date={2010}
}

\bib{Per:99}{book}{
    AUTHOR = {Peres, Yuval},
     TITLE = {Probability on trees: an introductory climb},
 BOOKTITLE = {Lectures on probability theory and statistics
              ({S}aint-{F}lour, 1997)},
    SERIES = {Lecture Notes in Math.},
    VOLUME = {1717},
     PAGES = {193--280},
 PUBLISHER = {Springer},
   ADDRESS = {Berlin},
      YEAR = {1999},
       URL = {http://dx.doi.org/10.1007/978-3-540-48115-7_3},
}

\bib{MP:10}{book}{
   author={M{\"o}rters, Peter},
   author={Peres, Yuval},
   title={Brownian motion},
   series={Cambridge Series in Statistical and Probabilistic Mathematics},
   note={With an appendix by Oded Schramm and Wendelin Werner},
   publisher={Cambridge University Press, Cambridge},
   date={2010},
   pages={xii+403},
   isbn={978-0-521-76018-8},
   review={\MR{2604525 (2011i:60152)}},
   doi={10.1017/CBO9780511750489},
}

\bib{Dur:10}{book}{
   author={Durrett, Rick},
   title={Probability: theory and examples},
   series={Cambridge Series in Statistical and Probabilistic Mathematics},
   edition={4},
   publisher={Cambridge University Press, Cambridge},
   date={2010},
   pages={x+428},
   isbn={978-0-521-76539-8},
   review={\MR{2722836 (2011e:60001)}},
   doi={10.1017/CBO9780511779398},
}

\end{biblist}
\end{bibdiv}

\end{document}